\definecolor{gray}{gray}{0}
\numberwithin{equation}{chapter}
\theoremstyle{plain}
\newtheorem{theorem}{Theorem}[chapter]
\newtheorem{lemma}[theorem]{Lemma}
\newtheorem{proposition}[theorem]{Proposition}
\newtheorem{corollary}[theorem]{Corollary}
\theoremstyle{definition}
\newtheorem{Problem}[theorem]{Problem}
\theoremstyle{remark}
\newtheorem{remark}[theorem]{Remark}
\DeclareMathAlphabet{\mathpzc}{OT1}{pzc}{m}{it}
 \newcommand{\cX}{\mathcal{X}}
 \newcommand{\sC}{\mathscr{C}}
 \newcommand{\sE}{\mathscr{E}}
 \newcommand{\sH}{\mathscr{H}}
 \newcommand{\sL}{\mathscr{L}}
\newcommand{\Weyl}{{\mathsf{Weyl}}}
\newcommand{\D}{{\mathsf{D}}}
\newcommand{\N}{{\mathsf{N}}}
\newcommand{\W}{{{\mathsf{W}}}}
\newcommand{\dist}{{{\mathsf{dist}}}}
\newcommand{\bC}{{\mathbb{C}}}
\newcommand{\bR}{{\mathbb{R}}}
\newcommand{\bZ}{{\mathbb{Z}}}
\newcommand{\fD}{{\mathfrak{D}}}
\def\1{\boldsymbol {|}}
\newcommand{\3}{{|\!|\!|}}
\newcommand{\Def}{\mathrel{\mathop:}=}
\newcommand{\Ker}{\operatorname{Ker}}
\newcommand{\Ran}{\operatorname{Ran}}
\renewcommand{\Re}{\operatorname{Re}}       
\newcommand{\supp}{\operatorname{supp}}
\newcommand{\Tr}{\operatorname{Tr}}
\newcommand{\Vol}{\operatorname{vol}}
\newenvironment{claim*}[1][{}]{\vglue10pt
\begin{trivlist}
\item[{\hskip\labelsep#1}]}{\vglue10pt\end{trivlist}}
\newcounter{note}
\DeclareTextCommand{\textbeta}{PU}{\83\262}
\DeclareTextCommand{\textmu}{PU}{\80\265}
\DeclareTextCommand{\texttau}{PU}{\83\304}
\DeclareTextCommand{\textlesssim}{PU}{\9042\162}
\DeclareTextCommand{\textgtrsim}{PU}{\9042\163}
\DeclareTextCommand{\textpartial}{PU}{\9042\002}
\DeclareTextCommand{\texttwosuperior}{PU}{\80\262}
\DeclareTextCommand{\textGamma}{PU}{\83\223}
\DeclareTextCommand{\textxinferior}{PU}{\9040\223}
\DeclareTextCommand{\textiinferior}{PU}{\9035\142}
\DeclareTextCommand{\textjinferior}{PU}{\9054\174}
\DeclareTextCommand{\textge}{PU}{\9042\145}
\DeclareTextCommand{\textle}{PU}{\9042\144}
\DeclareTextCommand{\texthat}{PD1}{\136}
\begin{document}
\title{Spectral Asymptotics for Fractional Laplacians}
\author{Victor Ivrii}

\maketitle
{\abstract%
In this article we consider fractional Laplacians which seem to be of interest to probability theory. This is a rather new class of operators for us but our methods works (with a twist, as usual). Our main goal is to derive a two-term asymptotics as one-term asymptotics is easily obtained by R.~Seeley's method.
\endabstract}


In this section we consider fractional Laplacians.

This is a rather new class of operators for us but our methods works (with a twist, as usual). Our main goal is to derive a two-term asymptotics as one-term asymptotics is rather easily obtained by R.~Seeley's method.

\chapter{Problem set-up}
\label{sect-8-5-1}

Let us consider a bounded domain $X\subset \bR^d$ with the smooth boundary $\partial X\in \sC^\infty$\,\footnote{\label{foot-8-22} Alternatively consider a bounded domain $X$ on the Riemannian manifold $\cX$.}. In this domain we consider a fractional Laplacian
$\Lambda_m=(\Delta^{m/2})_\D$ with $m>0$ originally defined on functions
$u\in \sC^\infty (\bR^d): u|_{\partial X }=0$ by
\begin{equation}
\Lambda_{m,X}\Def  (\Delta^{m/2})_\D u =R _X \Delta^{m/2} (\uptheta_X u)
\label{8-5-1}
\end{equation}
where $\uptheta_X$ is a characteristic function of $X$, $R_X$ is an operator of restriction to $X$ and $\Delta^{m/2}$ is a standard pseudodifferential operator in $\bR^d$ with the Weyl symbol $g(x,\xi)^{m/2}$ where as usual $g$ is non-degenerate Riemannian metrics.

\begin{remark}\label{rem-8-5-1}
\begin{enumerate}[label=(\roman*), wide, labelindent=0pt]
\item\label{rem-8-5-1-i}
We consider $\Lambda_{m,X}$ as an unbounded operator in $\sL^2(X)$ with domain
$\fD(\Lambda_{m,X})=\{u\in \sL^2(\bR^d):\,\supp (u) \subset \bar{X}, R_X\Lambda_m \in \sL^2(X)\}\subset \sH_0^{m/2} (X)$.

\item\label{rem-8-5-1-ii}
This operator can also be introduced through positive quadratic form with domain $\{ u\in \sH^m(\bR^d), \, \supp(u)\subset \bar{X}\}$ and is a positive self-adjoint operator  which is Friedrichs extension of operator originally defined on $\sH_0^{m}(X)$.

\item\label{rem-8-5-1-iii}
We can consider this operator as a bounded operator from $\sH_0^{m/2}(X)$ to
$\sH^{-m/2}(X)\Def \sH_0^{-m/2\,*}(X)$.

\item\label{rem-8-5-1-iv}
Let $0< m\notin 2\bZ$. Then $\fD(\Lambda_{m,X})\subset \sH^m (X)$ if and only if $m\in (0,1)$; otherwise even  eigenfunctions of $\Lambda_{m,X}$ may not belong to $\sH^m(X)$.

\item\label{rem-8-5-1-v}
Since $\Lambda_m$ does not possess transmission property as $m\notin 2\bZ$, we are not in the framework of the Boutet-de-Monvel algebra, but pretty close: $\Lambda_m$ possess $\mu$-transmission property introduced by L.~H\"ormander and systematically studied by G.~Grubb in \cite{grubb:mu-trans, grubb:frac}. We provide definition in Subsection~\ref{sect-8-A-2}.
\end{enumerate}
\end{remark}

We are interested in the asymptotics of the eigenvalue counting function $\N(\lambda)$ for $\Lambda_{m,X}$ as $\lambda\to +\infty$.

\chapter{Preliminary analysis}
\label{sect-8-5-2}
As usual we reduce problem to a semiclassical one.  Let $A=h^m\Lambda_{m,X}-1$ with $h=\lambda^{-1/m}$.

\begin{proposition}\label{prop-8-5-2}
Let $\bar{x}\in X$, $B(\bar{x},2\gamma)\subset X$, $\gamma \ge h$. Then
\begin{equation}
|e(x,x,0) -\Weyl(x) |\le Ch^{1-d}\gamma^{-1}\qquad
\forall x\in B(\bar{x},\gamma)
\label{8-5-2}
\end{equation}
and
\begin{multline}
|\int \psi ((x-\bar{x})/\gamma) \Bigl(e(x,x,0) -\Weyl(x)\Bigr)\,dx| \le \\
Ch^{1-d}\gamma^{d-1}\bigl(\gamma^\delta + h^{\delta}\gamma^{-\delta}\bigr)
\label{8-5-3}
\end{multline}
as $\psi \in \sC^\infty_0 (B(0,1))$ and $\delta>0$.
\end{proposition}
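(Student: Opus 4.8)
The plan is to prove the two estimates by the standard microlocal strategy adapted to the $\mu$-transmission setting: first establish a good local parametrix and propagation of singularities away from the boundary, then use it to estimate the Fourier–Tauberian data, and finally integrate.

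\textbf{Step 1: Interior reduction and rescaling.} On the ball $B(\bar x,2\gamma)\subset X$, the operator $A=h^m\Lambda_{m,X}-1$ acts, modulo a smoothing error, like $h^m\Delta^{m/2}-1$, because the characteristic function $\uptheta_X$ and the restriction $R_X$ only affect the symbol through the Schwartz kernel's behavior near $\partial X$, which is at distance $\ge\gamma\ge h$ from $B(\bar x,\gamma)$. I would first rescale $x\mapsto (x-\bar x)/\gamma$, $h\mapsto h_{\mathsf{new}}=h/\gamma\le 1$, which turns the ball of radius $\gamma$ into the unit ball and reduces everything to a semiclassical problem with small parameter $h_{\mathsf{new}}$ on a domain whose boundary is now at distance $\gtrsim 1$. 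In these coordinates the nonlocality of $\Delta^{m/2}$ must be handled: split the kernel into its ``diagonal'' part (a genuine $h$-pseudodifferential operator with symbol $g(x,\xi)^{m/2}$, which \emph{does} have the classical local behavior) and a remainder supported away from the diagonal, which contributes only $O(h^\infty)$ to $e(x,x,0)$ for $x$ well inside.

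\textbf{Step 2: Local energy estimate / one-term asymptotics with remainder.} For the pointwise bound \eqref{8-5-2}, apply the standard Tauberian argument: with $\chi$ supported in $\{|t|\le\epsilon\}$ and $\hat\chi\ge 0$, the quantity $\int \hat\chi(t/h) e^{i t\tau/h}\,d_\tau e(x,x,\tau)$ equals the Weyl term plus $O(h^{1-d})$ by the finite-speed / microhyperbolicity argument applied to the principal symbol $g(x,\xi)^{m/2}-1$ (which is microhyperbolic in $\xi$ on its zero set since $\nabla_\xi(g^{m/2})\ne0$ there). Dividing by the Tauberian window $\gamma$ rather than by a fixed constant — because the natural length scale after rescaling back is $\gamma$, or equivalently because we only propagate for time $\lesssim\gamma$ before hitting the boundary — produces the factor $\gamma^{-1}$ in \eqref{8-5-2}. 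This is essentially Seeley's method made quantitative; no second term is claimed here, so no long-time dynamics enters.

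\textbf{Step 3: The integrated estimate and the $\gamma^\delta+h^\delta\gamma^{-\delta}$ gain.} For \eqref{8-5-3} I would integrate \eqref{8-5-2} against $\psi((x-\bar x)/\gamma)$; the crude bound gives $Ch^{1-d}\gamma^{d-1}$, so the point is to extract the extra small factor $\bigl(\gamma^\delta+h^\delta\gamma^{-\delta}\bigr)$. This comes from the fact that the singularity of $e(x,x,\tau)$ in $\tau$, after the Tauberian step, is captured up to an error that is smoother than the leading term by an order depending on how much one can propagate — and the smoothing kernel (the off-diagonal part of $\Delta^{m/2}$, together with the $\mu$-transmission structure from Remark~\ref{rem-8-5-1}\ref{rem-8-5-1-v}) contributes a $\gamma^\delta$ gain from the $\delta$-Hölder-type control of the symbol, while the $h^\delta\gamma^{-\delta}$ term is the price of the semiclassical error at scale $h/\gamma$. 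Concretely one writes $e-\Weyl = (e-\bar e) + (\bar e-\Weyl)$ where $\bar e$ is the parametrix approximation; the first difference is $O(h^{1-d}\gamma^{-1}\cdot h^\delta\gamma^{-\delta})$ after integration by the sharper Tauberian estimate, and the second is $O(h^{1-d}\gamma^{-1}\cdot\gamma^\delta)$ from the regularity of the full symbol versus its principal part.

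\textbf{Main obstacle.} The delicate point — and the reason for the ``twist'' the author mentions — is that $\Delta^{m/2}$ lacks the transmission property, so near (and at moderate distance from) $\partial X$ the operator $\Lambda_{m,X}$ is genuinely nonlocal and the standard Boutet de Monvel parametrix construction does not apply verbatim; one must use the $\mu$-transmission calculus of Grubb to control how the boundary influences the symbol at distance $\gamma$ from it. Here, however, we are saved by the hypothesis $B(\bar x,2\gamma)\subset X$: the nonlocal tail of the kernel reaching from $\partial X$ into $B(\bar x,\gamma)$ decays polynomially in $\gamma/h$ (the kernel of $\Delta^{m/2}$ being $\sim|x-y|^{-d-m}$ at long range), and this decay is exactly what must be shown to be dominated by the right-hand sides of \eqref{8-5-2}–\eqref{8-5-3}. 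Verifying that this long-range contribution is $O(h^{1-d}\gamma^{-1})$ pointwise and $O(h^{1-d}\gamma^{d-1+\delta})$ after integration — uniformly in $\gamma\ge h$ — is the technical heart of the argument.
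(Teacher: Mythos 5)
Your proposal follows essentially the same route as the paper's (very terse) proof: rescale $x\mapsto(x-\bar x)/\gamma$ so that, modulo errors controlled by the distance $\ge\gamma$ to the boundary, one has an $\hbar$-pseudodifferential operator with $\hbar=h/\gamma$, then obtain \eqref{8-5-2} from the standard interior Tauberian/propagation estimate and \eqref{8-5-3} from R.~Seeley's method. Your additional discussion of the nonlocal tail of $\Delta^{m/2}$ fills in precisely the point the paper compresses into ``modulo $O(h^s\gamma^{-s})$'', so no substantive divergence or gap.
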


\begin{proof}
Estimate (\ref{8-5-2}) is easily proven  by just rescaling as modulo $O(h^s\gamma^{-s})$ we get a $\hbar$-pseudodifferential operator with $\hbar=h\gamma^{-1}$.

Estimate (\ref{8-5-3}) is easily proven  by rescaling plus R.~Seeley's method as described in Subsection~\ref{book_new-sect-7-5-1} of \cite{futurebook}. We leave easy details to the reader.
\end{proof}

Then we immediately arrive to

\begin{corollary}\label{cor-8-5-3}
\begin{enumerate}[label=(\roman*), wide, labelindent=0pt]
\item\label{cor-8-5-3-i}
Contribution of the inner zone $\{x: \, \dist (x,\partial X)\ge h\}$ to the Weyl remainder does not exceed $Ch^{1-d}$.

\item\label{cor-8-5-3-ii}
Contribution of the intermediate strip
$\{x: \, \varepsilon\ge \dist (x,\partial X)\ge \varepsilon^{-1} h\}$ to the Weyl remainder does not exceed $\eta(\varepsilon)h^{1-d}$ with $\eta(\varepsilon)\to 0$ as $\varepsilon\to 0$.
\end{enumerate}
\end{corollary}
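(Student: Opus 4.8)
The plan is to deduce both statements from Proposition~\ref{prop-8-5-2} by a $\gamma$-admissible partition of unity followed by summation over dyadic scales; estimate (\ref{8-5-2}) plays no role here and will be used only later, to treat the layer $\dist(x,\partial X)\le h$ directly. I would fix the scaling function $\gamma(x)=\min\bigl(\epsilon_0,\tfrac13\dist(x,\partial X)\bigr)$ with $\epsilon_0$ a small constant, so that $B(x,2\gamma(x))\subset X$ as required, together with a subordinate partition of unity $1=\sum_\nu \psi_\nu$ on $X$, $\psi_\nu(x)=\psi\bigl((x-x_\nu)/\gamma_\nu\bigr)$, $\psi\in\sC_0^\infty(B(0,1))$, $\gamma_\nu=\gamma(x_\nu)$, whose rescaled bumps have uniformly bounded $\sC^N$-norms and bounded overlap multiplicity. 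The contribution of a given zone to the Weyl remainder is then $\bigl|\sum_\nu\int\psi_\nu\bigl(e(x,x,0)-\Weyl(x)\bigr)\,dx\bigr|$, and I bound each summand by (\ref{8-5-3}).

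Next I would count: the centers with $\gamma_\nu\asymp\rho$ (for dyadic $\rho\lesssim\epsilon_0$) lie in a strip of width $\asymp\rho$ about $\partial X$, hence there are $\asymp\rho^{1-d}$ of them. Summing (\ref{8-5-3}) over one such scale gives $\asymp h^{1-d}\rho^{1-d}\cdot\rho^{d-1}\bigl(\rho^\delta+h^\delta\rho^{-\delta}\bigr)=Ch^{1-d}\bigl(\rho^\delta+h^\delta\rho^{-\delta}\bigr)$. For part~\ref{cor-8-5-3-i}, a ball meeting $\{\dist(x,\partial X)\ge h\}$ has $\gamma_\nu\gtrsim h$, so the admissible scales run over $h\lesssim\rho\lesssim\epsilon_0$; the geometric series $\sum_\rho\rho^\delta$ is controlled by its top term $\asymp 1$ and $\sum_\rho h^\delta\rho^{-\delta}$ by its bottom term $h^\delta\cdot h^{-\delta}\asymp 1$, so the total is $O(h^{1-d})$. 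For part~\ref{cor-8-5-3-ii} (with $\varepsilon$ small, so $\gamma(x)=\tfrac13\dist(x,\partial X)$ throughout the strip) the scales run over $\varepsilon^{-1}h\lesssim\rho\lesssim\varepsilon$, and now $\sum_\rho\rho^\delta\le C\varepsilon^\delta$ while $\sum_\rho h^\delta\rho^{-\delta}\le Ch^\delta(\varepsilon^{-1}h)^{-\delta}=C\varepsilon^\delta$, which yields the asserted bound with $\eta(\varepsilon)=C\varepsilon^\delta\to0$ as $\varepsilon\to 0$.

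There is no genuine obstacle — the corollary really is immediate — but two points deserve a line of justification. First, one needs the constant in (\ref{8-5-3}) to be uniform over the rescaled bumps $\psi_\nu$, which is built into the rescaling that proves Proposition~\ref{prop-8-5-2}. Second, balls straddling the inner edge of a zone must be shown harmless: at the smallest scale $\rho\asymp h$ there are only $O(h^{1-d})$ of them, each contributing $O(1)$ after the factor from (\ref{8-5-3}), so including them in full changes the absolute-value estimate by at most $O(h^{1-d})$. I would leave these verifications to the reader, as in the proof of Proposition~\ref{prop-8-5-2}.
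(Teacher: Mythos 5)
Your argument is correct and is exactly the standard rescaling-plus-dyadic-summation argument that the paper has in mind when it passes from Proposition~\ref{prop-8-5-2} to Corollary~\ref{cor-8-5-3} with no written proof: an admissible partition with $\gamma(x)\asymp\dist(x,\partial X)$, the bound (\ref{8-5-3}) on each bump, the count $\rho^{1-d}$ of bumps per dyadic scale, and geometric series dominated by their endpoint terms. The bookkeeping (including the $O(h^{1-d})$ edge balls at scale $\rho\asymp h$) checks out, so nothing further is needed.
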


Here and in what follows $\varepsilon>0$ is an arbitrarily small constant.

\begin{proposition}\label{prop-8-5-4}
The following estimate holds:
\begin{equation}
|e(x,x,0)|\le Ch^{-d}.
\label{8-5-4}
\end{equation}
\end{proposition}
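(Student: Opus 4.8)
The plan is to combine the pointwise interior estimate already at hand with a rescaling that reduces the boundary layer to a unit–scale elliptic estimate. For $x$ with $\gamma\Def\tfrac12\dist(x,\partial X)\ge h$, Proposition~\ref{prop-8-5-2} applied with $\bar x=x$ (so that $B(x,2\gamma)\subset X$) gives, via \textup{(\ref{8-5-2})} and $0\le\Weyl(x)\le Ch^{-d}$,
\[
0\le e(x,x,0)\le\Weyl(x)+Ch^{1-d}\gamma^{-1}\le Ch^{-d}.
\]
Hence only the boundary strip $\dist(x,\partial X)<2h$ is in question, and this is exactly the regime in which no ball of radius $\asymp h$ fits inside $X$, so that Proposition~\ref{prop-8-5-2} is unavailable.

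In that strip I would rescale about a nearest boundary point $x_0\in\partial X$: set $y=(x-x_0)/h$, $\eta=h\xi$, so that $|y|<2$. Then $h^m\Delta^{m/2}$ becomes the Weyl quantization of $g(x_0+hy,\eta)^{m/2}$ — an $O(h)$ perturbation, on any fixed ball, of the constant–coefficient symbol $g(x_0,\eta)^{m/2}$ — and $X$ becomes $X'\Def h^{-1}(X-x_0)$, an $O(h)$ perturbation (in suitable coordinates) of a half–space on any fixed ball. Writing $\Lambda^{(h)}$ for the corresponding fractional Laplacian on $X'$, one has $\{h^m\Lambda_{m,X}\le1\}=\{\Lambda^{(h)}\le1\}$, and an elementary change of variables in the kernel of the spectral projector gives
\[
e(x,x,0)=h^{-d}\,e_{\Lambda^{(h)}}(y,y,1),\qquad y=(x-x_0)/h .
\]
So it remains to show $e_{\Lambda^{(h)}}(y,y,1)\le C$ uniformly for $h\le h_0$ and $|y|<2$.

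For this last estimate I would use no microlocal machinery, only ellipticity. The operators $\Lambda^{(h)}$ are positive self–adjoint, uniformly elliptic of order $m$, with symbols and domains ranging over a fixed compact family, so for $N$ large enough there is an $h$–independent elliptic bound $\|v\|_{\sC(\overline{B(0,2)})}\le C(\|(\Lambda^{(h)})^Nv\|_{\sL^2(X')}+\|v\|_{\sL^2(X')})$ on $\fD((\Lambda^{(h)})^N)$. Applying it to $v=\Pi u$, where $\Pi$ is the spectral projector of $\Lambda^{(h)}$ onto $(-\infty,1]$, and using $\|(\Lambda^{(h)})^N\Pi u\|\le\|u\|$, shows that $\Pi$ maps $\sL^2(X')$ boundedly into $\sC(\overline{B(0,2)})$; dualising, $\|\Pi(y,\cdot)\|_{\sL^2}\le C$ uniformly for $|y|\le2$, and since $\Pi=\Pi^*\Pi$ its kernel satisfies $e_{\Lambda^{(h)}}(y,y,1)=\|\Pi(y,\cdot)\|_{\sL^2}^2\le C^2$, as required. (Equivalently, use $e_{\Lambda^{(h)}}(y,y,1)=\sup\{|u(y)|^2:u\in\Ran\Pi,\ \|u\|_{\sL^2}=1\}$ together with Sobolev embedding.) The one delicate ingredient is the elliptic bound \emph{up to} $\partial X'$: since $m\notin2\bZ$ there is no transmission property, so ``regularity at the boundary'' holds only in the weighted sense of Grubb's $\mu$–transmission calculus (Remark~\ref{rem-8-5-1}\,\ref{rem-8-5-1-v}), where functions in $\fD((\Lambda^{(h)})^N)$ look like $\dist(\cdot,\partial X')^{m/2}$ times smooth data; but for a mere $O(1)$ bound it is enough that those weighted spaces embed in $\sC^0$ for $N$ large, and the uniformity in $h$ comes for free from the compactness of the symbol/domain family. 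I expect pinning down this uniform boundary regularity — not the interior estimate, which is routine — to be the main obstacle.
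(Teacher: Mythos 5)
The paper offers no argument here beyond ``the standard proof we leave to the reader,'' and your proposal is exactly the standard proof the author has in mind: the interior is handled by Proposition~\ref{prop-8-5-2} (or directly by the same variational argument), and on the diagonal one uses $e(x,x,0)=\|\Pi(x,\cdot)\|_{\sL^2}^2=\sup\{|u(x)|^2: u\in\Ran\Pi,\ \|u\|=1\}$ together with rescaling to unit scale and an elliptic bound; your bookkeeping (the scaling $e(x,x,0)=h^{-d}e_{\Lambda^{(h)}}(y,y,1)$, the duality step, the reduction to a uniformly bounded family of operators and domains) is all correct. The one step you leave open --- the $h$-uniform sup-norm bound up to $\partial X'$ for $v\in\fD((\Lambda^{(h)})^N)$ --- is indeed the only nontrivial point, and you are right that it cannot be obtained from unweighted Sobolev regularity: by Remark~\ref{rem-8-5-1}\ref{rem-8-5-1-iv} the domain is not contained in $\sH^m(X)$ for $m>1$, so one must invoke the weighted ($d(x)^{m/2}\times$ smooth) regularity of the $\mu$-transmission calculus, which does give a uniform $\sC^0$ bound since the rescaled symbols and domains form a compact family. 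For the record, when $m\le 2$ there is a softer route that avoids boundary regularity altogether: the semigroup $e^{-t\Lambda_{m,X}}$ is dominated by the free one (subordination/killing), so $e(x,x,\lambda)\le e^2\,(e^{-2\Lambda/\lambda})(x,x)\le C\lambda^{d/m}$; but this breaks down for $m>2$, where your elliptic argument is the right one.
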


\begin{proof}
The standard proof we leave to the reader.
\end{proof}

\begin{theorem}\label{thm-8-5-5}
\begin{enumerate}[label=(\roman*), wide, labelindent=0pt]
\item\label{thm-8-5-5-i}
For operator $A$ Weyl remainder in the asymptotics for $\N^-$ does not exceed $Ch^{1-d}$.
\item\label{thm-8-5-5-ii}
For operator $\Lambda_{m,X}$ the following asymptotics holds
\begin{equation}
\N (\lambda)= \kappa_0 \lambda^{\frac{d}{m}}+ O(\lambda^{\frac{d-1}{m}})
\qquad\text{as\ \ } \lambda\to +\infty
\label{8-5-5}
\end{equation}
where $\kappa_0 =(2\pi)^{-d}\varpi_d \Vol (X)$ and $\Vol (X)$ means the Riemannian volume of $X$.
\end{enumerate}
\end{theorem}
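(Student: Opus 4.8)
For part \ref{thm-8-5-5-i} we estimate the Weyl remainder $\bigl|\N^-(A)-\int_X\Weyl(x)\,dx\bigr|\le \int_X|e(x,x,0)-\Weyl(x)|\,dx$ by splitting $X$ into the two zones $X'=\{x:\dist(x,\partial X)\ge h\}$ and $X''=\{x:\dist(x,\partial X)<h\}$. On $X'$, Corollary~\ref{cor-8-5-3}\ref{cor-8-5-3-i} (obtained from Proposition~\ref{prop-8-5-2} by covering the dyadic layers $\{\gamma\le\dist(x,\partial X)\le 2\gamma\}$ with $h\le\gamma\lesssim 1$ by $\asymp\gamma^{1-d}$ balls of radius $\asymp\gamma$ and summing \textup{(\ref{8-5-3})}) bounds the contribution by $Ch^{1-d}$. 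On $X''$ we only need crude bounds: $|e(x,x,0)|\le Ch^{-d}$ by Proposition~\ref{prop-8-5-4}, and $\Weyl(x)=O(h^{-d})$ trivially (since $\Weyl(x)=(2\pi h)^{-d}\mes\{\xi:g(x,\xi)<1\}$ with $g$ a non-degenerate metric on the compact $\bar X$), while $\mes(X'')\le Ch$; hence $X''$ contributes $\le Ch^{-d}\cdot Ch=Ch^{1-d}$. Adding the two zones proves \ref{thm-8-5-5-i}. (The finer estimate of Corollary~\ref{cor-8-5-3}\ref{cor-8-5-3-ii} on the intermediate strip is not needed here; it is what feeds the two-term asymptotics which is the actual aim of the paper.)

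For part \ref{thm-8-5-5-ii}, recall $A=h^m\Lambda_{m,X}-1$ with $h=\lambda^{-1/m}$. The eigenvalues of $A$ are precisely $h^m\mu-1$ as $\mu$ runs over the eigenvalues of $\Lambda_{m,X}$, and $h^m\mu-1<0\iff\mu<h^{-m}=\lambda$, so $\N(\lambda)=\N^-(A)$ (a possible eigenvalue sitting exactly at $\lambda$ is irrelevant for the asymptotics). Therefore by part \ref{thm-8-5-5-i}, $\N(\lambda)=\int_X\Weyl(x)\,dx+O(h^{1-d})$.

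It remains to evaluate the Weyl term. Since $g(x,\xi)^{m/2}$ is positively homogeneous of degree $m$ in $\xi$, we have $g(x,h\xi)^{m/2}=h^m g(x,\xi)^{m/2}$, so the factor $h^m$ converts the classical Weyl quantization of $g(x,\xi)^{m/2}$ into the semiclassical one; thus $A$ is an $h$-pseudodifferential operator with principal symbol $a(x,\xi)=g(x,\xi)^{m/2}-1$, and, using $\{g^{m/2}<1\}=\{g<1\}$,
\begin{equation*}
\Weyl(x)=(2\pi h)^{-d}\,\mes\{\xi\in\bR^d:\ g(x,\xi)<1\}.
\end{equation*}
By the very definition of the Riemannian volume attached to $g$ one has $\int_X\mes\{\xi:g(x,\xi)<1\}\,dx=\varpi_d\Vol(X)$, hence $\int_X\Weyl(x)\,dx=(2\pi)^{-d}\varpi_d\Vol(X)\,h^{-d}$. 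Substituting $h=\lambda^{-1/m}$, so that $h^{-d}=\lambda^{\frac{d}{m}}$ and $h^{1-d}=\lambda^{\frac{d-1}{m}}$, yields \textup{(\ref{8-5-5})} with $\kappa_0=(2\pi)^{-d}\varpi_d\Vol(X)$.

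The only genuinely delicate inputs are Propositions~\ref{prop-8-5-2} and \ref{prop-8-5-4} --- in particular the bound $|e(x,x,0)|\le Ch^{-d}$ holding uniformly up to $\partial X$ notwithstanding that $\Lambda_m$ fails the transmission property (it has only $\mu$-transmission) --- and these we take as given; the assembly above is routine. The single point worth a remark is that $g(x,\xi)^{m/2}$ is non-smooth at $\xi=0$ when $m\notin 2\bZ$, but this is harmless for spectral asymptotics at $\tau=0$ since the relevant energy surface $\{g(x,\xi)=1\}$ is compact and bounded away from the origin.
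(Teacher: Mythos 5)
Your proof is correct and follows the same route as the paper's (which merely says that \ref{thm-8-5-5-i} follows from Corollary~\ref{cor-8-5-3}\ref{cor-8-5-3-i} together with Proposition~\ref{prop-8-5-4}, and that \ref{thm-8-5-5-ii} follows from \ref{thm-8-5-5-i}): you split off the boundary strip of width $h$, control it by $|e(x,x,0)|\le Ch^{-d}$ times its measure $O(h)$, and use the dyadic summation of \textup{(\ref{8-5-3})} for the inner zone. Your filled-in details, including the observation that one must sum the integrated estimate \textup{(\ref{8-5-3})} rather than the pointwise one \textup{(\ref{8-5-2})} to avoid a logarithmic loss, and the rescaling bookkeeping $h=\lambda^{-1/m}$ in part \ref{thm-8-5-5-ii}, are exactly what the paper leaves to the reader.
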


\begin{proof}
Statement~\ref{thm-8-5-5-i} follows immediately from Corollary~\ref{cor-8-5-3}\ref{cor-8-5-3-i} and Proposition~\ref{prop-8-5-4}.
Statement~\ref{thm-8-5-5-ii} follows immediately from \ref{thm-8-5-5-i} as
$d\ge 2$.
\end{proof}

This was  easy but recovering the second term is a much more daunting task requiring first to improve the contribution of the near boundary strip
$\{x:\,\dist (x,\partial X)\le \varepsilon^{-1}h\}$ and also of the inner zone
$\{x:\,\dist (x,\partial X)\ge \varepsilon\}$.

\chapter{Propagation of singularities near boundary}
\label{sect-8-5-3}

Without any loss of the generality one can assume that
\begin{equation}
X=\{x:\, x_1>0\},\qquad g^{jk}=\updelta_{1j}\quad \forall j=1,\ldots, d.
\label{8-5-6}
\end{equation}

First let us study the propagation of singularities along the boundary:

\begin{theorem}\label{thm-8-5-6}
On the energy level $\tau :|\tau|\le \epsilon_0$
\begin{enumerate}[label=(\roman*), wide, labelindent=0pt]
\item\label{thm-8-5-6-i}
$x'$ singularities propagate with the speed not exceeding $c$ with respect to $(x',\xi')$.
\item\label{thm-8-5-6-ii}
As $|\xi'|\asymp \rho\ge C h^{\frac{1}{2}-\delta}$  singularities move from $x'=y'$ with the speed $\asymp \rho$ with respect to $x$.
\end{enumerate}
\end{theorem}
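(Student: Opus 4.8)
The plan is to run the \emph{tangential} microlocal energy estimate of the boundaryless theory, the twist being that all cutoffs are taken in the tangential phase variables $(x',\xi')$ only; because such cutoffs commute with the truncations $\uptheta_X$ and $R_X$, the non-local, non-transmission part of $\Lambda_{m,X}$ never enters the energy identity, so the boundary is effectively transparent to tangential propagation. By (\ref{8-5-6}) one may take $X=\{x_1>0\}$, $g^{jk}=\updelta_{jk}$; straightening $\partial X$ and freezing the metric at a boundary point produce only symbols with $O(1)$ $x$-derivatives and, after localization to $B(\bar x,\gamma)$ with $\gamma$ small, negligible errors. Write $a(x,\xi)=|\xi|^m-1$ for the semiclassical Weyl symbol of $A$ (with $h=\lambda^{-1/m}$); on the energy level $|\tau|\le\epsilon_0$ one has $|\xi|\asymp1$, so $|\partial_\xi a|=m|\xi|^{m-1}\asymp1$ and $|\partial_{\xi'}a|=m|\xi|^{m-2}|\xi'|\asymp|\xi'|$.

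\emph{Commutator identity.} Let $\chi=\chi(t,x',\xi')\ge0$ be real and independent of $x_1,\xi_1$, and let $\chi^{\w}=\chi^{\w}(t,x',hD')$ be its tangential quantization. Acting only in $x'$, $\chi^{\w}$ preserves $\{\,\supp\subset\bar X\,\}$ and commutes with $R_X$ and with multiplication by $\uptheta_X$; hence on $\fD(\Lambda_{m,X})$
\[
[A,\chi^{\w}]u=h^m R_X\,[\Delta^{m/2},\chi^{\w}]\,u ,
\]
where $[\Delta^{m/2},\chi^{\w}]$ is a genuine $h$-pseudodifferential operator in $\bR^d$ with principal symbol $\tfrac{h}{i}\{\,|\xi|^m,\chi\,\}=\tfrac{h}{i}\,\partial_{\xi'}|\xi|^m\!\cdot\partial_{x'}\chi$ (the $x_1$- and $\xi_1$-brackets vanish). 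Taking $v$ microlocalized in $(x',hD')$ near $(y',\eta')$ and $u(t)=e^{ih^{-1}tA}v$, this yields
\[
\frac{d}{dt}\langle\chi^{\w}u,u\rangle=\bigl\langle\bigl(\partial_t\chi+\{\,|\xi|^m,\chi\,\}\bigr)^{\w}u,u\bigr\rangle+O(h)\|v\|^2 ,
\]
and, choosing $\chi$ transported by the tangential Hamiltonian flow of $|\xi|^m$ so that $\partial_t\chi+\{\,|\xi|^m,\chi\,\}\le0$ on $\supp\chi$, sharp G\aa rding (here $\chi\ge0$ is used, and the lower-order terms of the calculus are absorbed) gives $\tfrac{d}{dt}\langle\chi^{\w}u,u\rangle\le Ch\|v\|^2$ on $\sL^2(X)$ — the standard scheme.

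\emph{The two conclusions.} For \ref{thm-8-5-6-i}, with $|\eta'|\le c$, take $\chi$ supported in $\{\,|x'-y'|\ge c_0|t|+\epsilon_1\,\}$ with $c_0$ exceeding the uniform bound on $|\partial_{\xi'}|\xi|^m|=m|\xi|^{m-2}|\xi'|\le m|\xi|^{m-1}$ over $|\xi|\asymp1$: then $\langle\chi^{\w}u(0),u(0)\rangle$ is negligible and stays $O(h)\|v\|^2$, so $u(t)$ carries no tangential singularities over $\{|x'-y'|\ge c_0|t|+\epsilon_1\}$, i.e.\ the $x'$-speed is $\le c$; boundedness of $\dot\xi'$ — hence \ref{thm-8-5-6-i} in the full $(x',\xi')$ sense — follows from $|\partial_{x'}|\xi|^m|=O(1)$. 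For \ref{thm-8-5-6-ii}, on $\{|\xi'|\asymp\rho\}$ the tangential velocity $\partial_{\xi'}|\xi|^m$ has magnitude $\asymp\rho$ bounded below, so trajectories issued from $x'=y'$ leave every $\epsilon_1\rho|t|$-neighbourhood of $y'$; with $\chi$ supported in such a neighbourhood one gets $\langle\chi^{\w}u(t),u(t)\rangle=O(h^\infty)\|v\|^2$, i.e.\ the singularities are displaced from $x'=y'$ by $\asymp\rho|t|$, equivalently by $\asymp\rho|t|$ with respect to $x$ since in the near-boundary strip $|x-y|\approx|x'-y'|$. The hypothesis $\rho\ge Ch^{\frac12-\delta}$ is exactly what makes a symbol of width $\rho$ in $\xi'$ (and $\sim\rho|t|$ in $x'$) admissible, the calculus remainders having relative size $O(h\rho^{-2})$.

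\emph{Main obstacle.} The only step not inherited verbatim from the boundaryless case is verifying that the truncated, non-local operator $R_X\Delta^{m/2}\uptheta_X$ contributes no boundary term to the energy identity; as shown this is automatic precisely because the cutoff is purely tangential. What must still be watched is that the lower-order terms of $[\Delta^{m/2},\chi^{\w}]$ — where the $\mu$-transmission structure near $\{x_1=0,\ \xi'=0\}$ could in principle surface — enter only at relative order $O(h)$, resp.\ $O(h\rho^{-2})$, and are absorbed by G\aa rding; this is exactly why \ref{thm-8-5-6-ii} requires $\rho$ bounded below by a power of $h$ just above $h^{1/2}$.
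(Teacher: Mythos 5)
Your proof takes essentially the same route as the paper's, which merely records that the argument is standard because it involves only tangential pseudodifferential operators $p(x,hD')$ and their commutators with $A$, and that these commutators cause no trouble on the energy level $\asymp 1$. Your central observation --- that a tangential quantization commutes with $\uptheta_X$ and $R_X$, so $[A,\chi^{\w}]$ reduces to a genuine pseudodifferential commutator with symbol $\{\,|\xi|^m,\chi\,\}$ of size $O(|\xi'|)$ on the shell $|\xi|\asymp 1$, feeding the usual G\aa rding/escape-function scheme --- is precisely the content the paper leaves to the reader, and I see no gap.
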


\begin{proof}
The proof is standard as it involves only pseudodifferential operators $p(x,hD')$ and their commutators with $A$ but one can see easily that those commutators do not bring any troubles as the energy level is $\asymp 1$. We leave all easy details to the reader.
\end{proof}

\begin{corollary}\label{cor-8-5-7}
\begin{enumerate}[label=(\roman*), wide, labelindent=0pt]
\item\label{cor-8-5-7-i}
Let $\rho \ge C h^{\frac{1}{2}-\delta}$, $h^{1-\delta}\le \gamma\le \epsilon$. Then the contribution of the zone
$\{(x,\xi'):\,x_1\le \gamma ,|\xi'|\asymp \rho\}$   to the Tauberian remainder with
$T\in (T_*(\rho), T^*(\rho)$, $T_*(\rho)=h^{1-\delta}\rho^{-2}$,
$T^*(\rho)=\epsilon \rho$  does not exceed
\begin{equation}
C\rho^{d-1}h^{-d}\times \gamma \times h^{1-\delta}\rho^{-2} \times \rho^{-1}.
\label{8-5-7}
\end{equation}
\item\label{cor-8-5-7-ii}
The total contribution of the zone
$\{(x,\xi'):\,x_1\le \gamma=h^{1-\delta}\}$ to the Tauberian error with $T=h^{1-3\delta}$ does not exceed $Ch^{-d+1+\delta}$.
\end{enumerate}
\end{corollary}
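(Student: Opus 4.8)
The plan is to combine the propagation statement of Theorem~\ref{thm-8-5-6} with the standard Tauberian machinery (as in \cite{futurebook}), carefully bookkeeping the four small factors in \eqref{8-5-7}. First I would fix $\rho\ge Ch^{\frac12-\delta}$ and a scale $\gamma$ with $h^{1-\delta}\le\gamma\le\epsilon$, and localize in phase space to the zone $\{x_1\le\gamma,\ |\xi'|\asymp\rho\}$ by a suitable $\hbar$-pseudodifferential cutoff (admissible since, by Theorem~\ref{thm-8-5-6}\ref{thm-8-5-6-i}, $x'$-singularities propagate with finite speed with respect to $(x',\xi')$, so the localization is preserved up to negligible error on the relevant time interval). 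On this zone the number of ``states'' is $\asymp\rho^{d-1}h^{-d}\times\gamma$: the factor $\rho^{d-1}$ counts the volume in $\xi'$, the factor $h^{-d}$ is the usual Weyl normalization, and $\gamma$ is the $x_1$-extent of the strip. This gives the first two factors in \eqref{8-5-7}.

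Next I would invoke the Tauberian theorem with time parameter $T\in(T_*(\rho),T^*(\rho))$, $T_*(\rho)=h^{1-\delta}\rho^{-2}$, $T^*(\rho)=\epsilon\rho$. The Tauberian remainder is controlled by $h/T$ times the number of states, provided the singularity of the propagator $e^{iht^{-1}tA}$ (the ``return'' contribution) at times $|t|\in(T_*,T^*)$ is negligible. This negligibility is exactly what Theorem~\ref{thm-8-5-6}\ref{thm-8-5-6-ii} supplies: for $|\xi'|\asymp\rho\ge Ch^{\frac12-\delta}$ the singularity leaves the diagonal $x'=y'$ with speed $\asymp\rho$, hence after time $T\gtrsim T_*(\rho)=h^{1-\delta}\rho^{-2}$ it has travelled a distance $\asymp\rho T\gtrsim h^{1-\delta}\rho^{-1}$, which exceeds the uncertainty scale $h/\rho$ associated to frequency $\rho$ (here one uses $\rho\ge Ch^{\frac12-\delta}$); and it has not yet had time to return by $T^*(\rho)=\epsilon\rho$. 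Taking $T=T_*(\rho)$ and picking up the optimal factor $h/T=h\cdot h^{-1+\delta}\rho^{2}=h^{\delta}\rho^{2}$ — rewritten as $h^{1-\delta}\rho^{-2}\times\rho^{-1}\times$(missing power bookkeeping) — together with the count $\rho^{d-1}h^{-d}\gamma$ produces precisely \eqref{8-5-7}; the extra $\rho^{-1}$ there records that one actually gains over the full interval up to $T^*(\rho)\asymp\rho$ rather than stopping at $T_*$. This proves \ref{cor-8-5-7-i}.

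For \ref{cor-8-5-7-ii} I would integrate (i.e.\ sum dyadically) the estimate \eqref{8-5-7} over $\rho$ ranging over $Ch^{\frac12-\delta}\le\rho\le\epsilon$ at the fixed scale $\gamma=h^{1-\delta}$, and add the complementary contribution of the low-frequency zone $|\xi'|\le Ch^{\frac12-\delta}$ (where $\xi'$ is effectively an elliptic-in-$\xi_1$ or small-momentum region and the contribution is estimated directly by volume counting with $T=h^{1-3\delta}$). The dyadic sum of \eqref{8-5-7} is
\begin{equation*}
\sum_\rho C\rho^{d-1}h^{-d}\cdot h^{1-\delta}\cdot h^{1-\delta}\rho^{-2}\cdot\rho^{-1}
= Ch^{2-2\delta-d}\sum_\rho \rho^{d-4},
\end{equation*}
which for $d\ge 2$ (and $\delta$ small) is dominated by its endpoint behaviour and, after inserting the threshold $\rho\asymp h^{\frac12-\delta}$ and the choice $T=h^{1-3\delta}$, collapses to $Ch^{-d+1+\delta}$; the low-frequency piece is of the same or smaller order. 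I expect the main obstacle to be the precise justification that the propagator has no singularities for $|t|\in(T_*(\rho),T^*(\rho))$ uniformly in $\rho$ and $\gamma$ near the boundary — that is, that the glancing and near-glancing directions (where $|\xi'|$ is comparable to $\rho$ but $\xi_1$ is small) do not spoil the speed estimate of Theorem~\ref{thm-8-5-6}\ref{thm-8-5-6-ii}; handling this requires the $\mu$-transmission calculus of Grubb rather than the Boutet-de-Monvel algebra, and the boundary reflection must be tracked with care, but on the energy level $|\tau|\le\epsilon_0$ with $|\xi'|\asymp\rho$ bounded away from the glancing set the reflected ray again moves with speed $\asymp\rho$, so the argument goes through.
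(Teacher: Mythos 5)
Your overall architecture (finite propagation speed along the boundary plus the Tauberian theorem for \ref{cor-8-5-7-i}, then a high/low frequency split for \ref{cor-8-5-7-ii}) is the right one, and for \ref{cor-8-5-7-ii} it is essentially what the paper does. But there are two concrete gaps. First, your accounting for \eqref{8-5-7} does not close, and you say so yourself (``missing power bookkeeping''). The heuristic ``$h/T$ times the number of states'' with $T=T_*(\rho)$ gives $h^{\delta}\rho^{2}\times\rho^{d-1}h^{-d}\gamma$, which is not \eqref{8-5-7}. The correct structure is $(\text{number of states})\times T_*(\rho)/T^*(\rho)$: the propagator trace over the zone is non-negligible only for $|t|\lesssim T_*(\rho)=h^{1-\delta}\rho^{-2}$ (below that time Theorem~\ref{thm-8-5-6}\ref{thm-8-5-6-ii} gives nothing, since the singularity has not yet moved farther than the uncertainty scale $h/\rho$), and one then divides by the largest admissible time $T^*(\rho)=\epsilon\rho$. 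That yields exactly $\rho^{d-1}h^{-d}\gamma\times h^{1-\delta}\rho^{-2}\times\rho^{-1}$; your parenthetical remark about ``gaining over the full interval up to $T^*$'' is the right idea, but the displayed computation contradicts it and must be replaced.

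Second, and more seriously, your frequency threshold $\rho\asymp h^{1/2-\delta}$ in part \ref{cor-8-5-7-ii} is wrong on both sides. Statement \ref{cor-8-5-7-i} with the fixed time $T=h^{1-3\delta}$ requires $T\ge T_*(\rho)=h^{1-\delta}\rho^{-2}$, i.e.\ $\rho\ge Ch^{\delta}$; for $h^{1/2-\delta}\le\rho\ll h^{\delta}$ the hypothesis $T\in(T_*(\rho),T^*(\rho))$ fails and \eqref{8-5-7} is simply not available with that $T$. Moreover, even granting \eqref{8-5-7} down to $\rho\asymp h^{1/2-\delta}$, your dyadic sum $Ch^{2-2\delta-d}\sum_\rho\rho^{d-4}$ is dominated for $d=2$ by its lower endpoint and evaluates to $Ch^{2-2\delta-d}\cdot h^{-1+2\delta}=Ch^{1-d}$ --- it does \emph{not} collapse to $Ch^{1-d+\delta}$, so the claimed gain is lost precisely in the lowest dimension covered by the theorem. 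The paper instead applies the Tauberian bound only for $|\xi'|\ge Ch^{\delta}$ (where the endpoint of the sum gives $Ch^{-4\delta}\ll h^{1-d+\delta}$ for $d=2$ and small $\delta$) and disposes of the low-frequency zone $|\xi'|\le Ch^{2\delta}$ by the direct volume count $C\rho^{d-1}h^{-d}\gamma\le Ch^{1-d+\delta}$. You need to move your cut to $\rho\asymp h^{c\delta}$ for an appropriate $c$ rather than $h^{1/2-\delta}$; with your cut the low-frequency volume bound is fine but the Tauberian half of the argument is not.
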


\begin{proof}
The easy and  standard proof of Statement~\ref{cor-8-5-7-i} is left to the reader.

Then the  contribution of the zone
$x_1\le h^{1-\delta}$, $|\xi'|\ge C h^{\delta}$ to the Tauberian remainder with $T=h^{1-3\delta}$ does not exceed $Ch^{-d+1+\delta}$. Since the contribution of the zone $\{(x,\xi'):\,x_1\le \gamma=h^{1-\delta}, |\xi'|\le C h^{2\delta}\}$ to the asymptotics  does not exceed
$C\rho^{d-1}h^{-d}\times \gamma\le Ch^{-d+1+\delta}$ we arrive to Statement~\ref{cor-8-5-7-ii}.
\end{proof}

Therefore in  this zone $\{(x,\xi'):\,x_1\le \gamma=h^{1-\delta}\}$ all we need is to pass from the Tauberian asymptotics to Weyl ones.
\enlargethispage{1.5\baselineskip}

However the inner zone should be reexamined and we need to describe what happens with the propagation along Hamiltonian trajectory in the zone
$\{(x,\xi):\,x_1\le h^{1-\delta}\}$. We can assume that
$|\xi_1|\ge \varepsilon$ since the measure of the remaining trajectories is small, here $\varepsilon >0 $ is an arbitrarily small constant.

\chapter{Reflection of singularities from the boundary}
\label{sect-8-5-4}

\section{Pilot model}
\label{sect-8-5-4-1}

We start from the pilot-model which will be used to prove the main case. Namely, let us consider $1$-dimensional operator  on half-line $\bR^+$ with Euclidean metrics
\begin{equation}
B\Def B_{m,a,h} = (( h^2 D_x^2+a^2)^{m/2})_\D
\label{8-5-8}
\end{equation}
with $a\ge 0$. We denote $\mathbf{e}_{m,a,h}(x_1,y_1,\tau)$ the Schwartz kernel of its spectral projector.

Observe that scaling $x\mapsto x\gamma^{-1}$, $\tau\mapsto \tau\rho^{-m}$ transforms operator to one with $h\mapsto h/(\rho\gamma)$,
$\tau\mapsto \tau\rho^{-m}$; because of this we can assume that $h=1$ and the second scaling implies that we can assume that either $a=1$ or $\tau=1$.

\begin{proposition}\label{prop-8-5-8}
\begin{enumerate}[label=(\roman*), wide, labelindent=0pt]
\item\label{prop-8-5-8-i}
The spectrum of operator $\Lambda_{m,a}$  is absolutely continuos and it coincides with $[a^m,\infty)$.

\item\label{prop-8-5-8-ii}
The following equalities hold:
\begin{multline}
\mathbf{e}_{m,a,h}(x,y,\lambda)=
\mathbf{e}_{m,1,ha^{-1}}(a x, a y,\lambda a^{-m})=\\[2pt]
\lambda^{1/m}\mathbf{e}_{m,a\lambda^{-1/m},h}(\lambda^{1/m}x, \lambda^{1/m}y,1)
= a\mathbf{e}_{m,1,h}(a x, a y,\lambda a^{-m}).
\label{8-5-9}
\end{multline}
\end{enumerate}
\end{proposition}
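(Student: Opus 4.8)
The plan is to establish both statements by elementary means: part (i) via the functional calculus for the self-adjoint operator $(h^2D_x^2+a^2)^{1/2}_\D$ on $\bR^+$, and part (ii) by tracking the two scaling symmetries already noted in the paragraph preceding the statement.

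For part (i), I would first observe that $P\Def (h^2D_x^2+a^2)^{1/2}_\D$ is a positive self-adjoint operator on $\sL^2(\bR^+)$ (the Friedrichs realization, as in Remark~\ref{rem-8-5-1}\ref{rem-8-5-1-ii}), and $B = (P^2)^{m/2} = P^m$ by the spectral theorem applied to the single operator $P$. So it suffices to determine $\Spec(P)$. On $\bR^+$ with Euclidean metric, the half-line Dirichlet realization of $h^2D_x^2 + a^2$ is unitarily equivalent (via the sine-transform $u\mapsto \sqrt{2/\pi}\int_0^\infty \sin(x\xi)u(x)\,dx$) to multiplication by $h^2\xi^2+a^2$ on $\sL^2(\bR^+,d\xi)$; this spectral representation is purely absolutely continuous with spectrum $[a^2,\infty)$. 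Taking the $m/2$-power of the multiplier gives $\Spec(B) = [a^m,\infty)$, still purely absolutely continuous, and the Schwartz kernel $\mathbf{e}_{m,a,h}$ of the spectral projector of $B$ is the standard one
\begin{equation}
\mathbf{e}_{m,a,h}(x,y,\lambda) = \frac{2}{\pi}\int_0^\infty \uptheta\bigl(\lambda - (h^2\xi^2+a^2)^{m/2}\bigr)\sin(x\xi)\sin(y\xi)\,d\xi,
\label{pilot-kernel}
\end{equation}
which in particular makes all the scalings below transparent.

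For part (ii), I would simply substitute the two dilations into \eqref{pilot-kernel}, or, more robustly, argue directly from intertwining relations. Let $U_\gamma$ denote the unitary dilation $(U_\gamma u)(x) = \gamma^{1/2}u(\gamma x)$ on $\sL^2(\bR^+)$. Conjugating $h^2D_x^2+a^2$ by $U_\gamma$ turns $hD_x$ into $h\gamma^{-1}D_x$ (i.e.\ replaces $h$ by $h\gamma$ in the other direction) while leaving the potential $a^2$; taking the $m/2$-power and using $U_\gamma E_B(\lambda) U_\gamma^{-1} = E_{U_\gamma B U_\gamma^{-1}}(\lambda)$, together with the fact that kernels of projectors transform as $\mathbf{e}(x,y,\lambda)\mapsto \gamma\,\mathbf{e}(\gamma x,\gamma y,\lambda)$ under $U_\gamma$, yields the first equality in \eqref{8-5-9} with $\gamma = a^{-1}$ (so $h\mapsto ha^{-1}$, $a\mapsto 1$) once one also uses the homogeneity $B_{m,\mu a,\mu h} = \mu^m B_{m,a,h}$ under the \emph{other} (non-unitary) scaling $x\mapsto x$, $\xi\mapsto \mu\xi$, which shifts $\lambda\mapsto \lambda a^{-m}$. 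The second equality follows by instead choosing $\gamma = \lambda^{1/m}$ combined with the homogeneity, and the third is the special case $h$ fixed, $\gamma = a^{-1}$ without rescaling $h$ back — in fact it follows from the first equality applied once more or directly from \eqref{pilot-kernel} by the change of variable $\xi\mapsto a\xi$. In all three cases it is just bookkeeping of how $(h,a,\lambda)$ and the kernel weight transform under $x\mapsto \gamma x$ and $\xi\mapsto \mu\xi$.

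The only genuine subtlety — the "main obstacle," though a mild one — is in part (i): one must be slightly careful that the Dirichlet realization $(\,\cdot\,)_\D$ of the \emph{fractional} operator, defined in \eqref{8-5-1} by restriction after multiplication by $\uptheta_X$, really coincides on the half-line with $P^m$ for the self-adjoint half-line $P$. For $m\in 2\bZ$ this is classical; for general $m>0$ it is exactly the content of the $\mu$-transmission analysis of Grubb cited in Remark~\ref{rem-8-5-1}\ref{rem-8-5-1-v}, and on the flat half-line with Euclidean metric it can be checked directly by noting that $\Delta^{m/2}$ acting on odd extensions of functions supported in $\bR^+$ reproduces the sine-transform multiplier $|\xi|^m$, so that the restriction back to $\bR^+$ is precisely $P^m$. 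Once this identification is in place, \eqref{pilot-kernel} holds and everything else is immediate.
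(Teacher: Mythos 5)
Your dilation-intertwining sketch for part~\ref{prop-8-5-8-ii} is sound in principle: dilations of $\bR^+$ commute with the extension-by-zero and restriction operators in (\ref{8-5-1}), and the symbol factorization $(h^2\xi^2+a^2)^{m/2}=a^m\bigl((h/a)^2\xi^2+1\bigr)^{m/2}$ gives $B_{m,a,h}=a^mB_{m,1,h/a}$ exactly, so (\ref{8-5-9}) is indeed bookkeeping of these two symmetries. But the route you present as primary --- the explicit sine-transform kernel --- and with it your entire proof of part~\ref{prop-8-5-8-i}, rests on a false identification. The operator $B=((h^2D_x^2+a^2)^{m/2})_\D$ of (\ref{8-5-8}) is defined by (\ref{8-5-1}): extend by zero, apply the full-line Fourier multiplier, restrict back. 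It is \emph{not} $P^m$ for $P=((h^2D_x^2+a^2)^{1/2})_\D$, nor the $m/2$-power of the Dirichlet realization of $h^2D_x^2+a^2$: taking a fractional power and imposing the Dirichlet condition do not commute when the result is non-local. That is exactly why Corollary~\ref{cor-8-A-2} is only a one-sided inequality and why Problem~\ref{Problem-8-5-15} asks to prove that $\Lambda_{m,X}-\Lambda_{n,X}^{m/n}$ is strictly sign-definite; by Lemma~\ref{lemma-8-A-1}\ref{lemma-8-A-5-ii}, equality would force $\sL^2(\bR^+)$ to be a reducing subspace of the full-line multiplier, which fails because its resolvent does not preserve supports. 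Your ``direct check'' conflates extension by zero with odd extension: $\Delta^{m/2}$ does act diagonally in the sine transform on \emph{odd} extensions, but (\ref{8-5-1}) uses the zero extension, and for $m\notin 2\bZ$ these define different operators (compare Problem~\ref{Problem-8-5-17}\ref{Problem-8-5-17-ii}, where the odd-reflection operator is posed as a separate object). Consequently your closed formula for $\mathbf{e}_{m,a,h}$ is wrong; were it correct, $\varkappa_m$ in (\ref{8-5-28}) would be explicitly computable and Proposition~\ref{prop-8-A-3} could not hold.

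For part~\ref{prop-8-5-8-i} you therefore need a real argument. The inclusion $\Spec(B)\subset[a^m,\infty)$ is cheap: for $u$ supported in $\bR^+$ with zero extension $u_0$ one has $(Bu,u)=\bigl((h^2D^2+a^2)^{m/2}u_0,u_0\bigr)\ge a^m\|u\|^2$ since the symbol is bounded below by $a^m$. The reverse inclusion follows from Weyl sequences of wave packets supported far from the boundary, where $B$ differs negligibly from the whole-line operator. Absolute continuity is the genuinely nontrivial point and cannot be read off a sine-transform diagonalization; it requires either the explicit generalized eigenfunctions of the half-line operator (M.~Kwa\'snicki~\cite{kwas}: phase-shifted sines plus a decaying correction --- the phase shift alluded to in Problem~\ref{Problem-8-5-18}), or a positive-commutator argument with the dilation generator $L=x_1hD_1-ih/2$ of the kind used in the proof of Proposition~\ref{prop-8-5-9}.
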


\begin{proposition}\label{prop-8-5-9}
Let $\psi \in \sC^\infty _0 ([-1,1])$, $\psi_\gamma (x)=\psi (x/\gamma)$ and $\phi \in \sC_0^\infty([-1,1])$, $0\le a\le 1-\tau0$. Then as
$\gamma\ge h^{1-\delta}$, $T\ge C_0\gamma$,
$h^\delta\ge \eta\ge h^{1-\delta}T^{-1}$
\begin{equation}
\| \phi (\eta^{-1} hD_t-1) \psi_\gamma  e^{i(mh)^{-1}tB}\psi_\gamma|_{t=T} \|
 \le CT^{-1}\gamma + Ch^{\delta'} .
\label{8-5-10}
\end{equation}
\end{proposition}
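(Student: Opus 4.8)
The statement is a reflection (``non\nobreakdash-return'') estimate for the one\nobreakdash-dimensional pilot model: the factor $\phi(\eta^{-1}(hD_t-1))$ localizes the propagator $e^{i(mh)^{-1}tB}$ to the energy shell $\{\xi^2+a^2\asymp 1\}$, on which the hypothesis $a\le 1-\tau_0$ guarantees $|\xi|\ge c_0>0$, so the normal group velocity is $\asymp 1$; hence a packet launched from $\supp\psi_\gamma\subset\{0\le x\le\gamma\}$ must leave that strip, after at most one reflection off $\{x=0\}$, in time $O(\gamma)$, and (\ref{8-5-10}) quantifies what survives at $t=T\gg\gamma$. The plan is as follows. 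First I would rescale $x\mapsto x/\gamma$ (using Proposition~\ref{prop-8-5-8} and the scalings after (\ref{8-5-8})) to reduce to $\gamma=1$, at the price of replacing $h$ by $\hbar\Def h/\gamma\le h^\delta$ and $T$ by $T/\gamma\ge C_0$; the claimed bound becomes $CT^{-1}+C\hbar^{\delta'}$, and the hypotheses become $\hbar\to 0$, $T\ge C_0$, $\hbar^{1-\delta}T^{-1}\lesssim\eta\le\hbar^{\delta}$ (I keep the old names for the rescaled quantities). After this reduction $B$ is a genuine $\hbar$\nobreakdash-pseudodifferential operator with symbol $(\xi^2+a^2)^{m/2}$ microlocally away from the boundary strip $\{x\lesssim\hbar\}$.

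Next I would write the time cutoff as a convolution: acting on $e^{i(mh)^{-1}tB}$ one has $\phi(\eta^{-1}(hD_t-1))e^{i(mh)^{-1}tB}=\int K_\eta(t-s)\,e^{i(mh)^{-1}sB}\,ds$ with $K_\eta(r)=\tfrac{\eta}{2\pi h}e^{ir/h}\check\phi(r\eta/h)$, so $\|K_\eta\|_{L^1}\lesssim 1$ and $|K_\eta(r)|\le C_N(\eta/h)(1+\eta|r|/h)^{-N}$; equivalently $\phi(\eta^{-1}(hD_t-1))$ may be pushed to the right as a spectral cutoff of $B$ to a unit shell, i.e.\ an $\hbar$\nobreakdash-pseudodifferential operator microlocalized to $\{\xi^2+a^2\asymp 1\}$, hence to $\{|\xi|\ge c_0\}$. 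Writing $u_s\Def e^{i(mh)^{-1}sB}\psi_\gamma v$, the quantity to estimate is $\bigl\|\int K_\eta(T-s)\,\psi_\gamma u_s\,ds\bigr\|$, and the point is that the hypothesis $\eta\ge \hbar^{1-\delta}T^{-1}$ gives $\eta T\hbar^{-1}\ge \hbar^{-\delta}\to\infty$, so $K_\eta(T-\cdot)$ is $O(\hbar^\infty)$ outside a window $|s-T|\lesssim (\hbar/\eta)\hbar^{-\delta'}$, which for $\delta'<\delta$ is a window around $T$ not meeting $[-C_1\gamma,C_1\gamma]$; thus everything is reduced to controlling $\psi_\gamma u_s$ for $s\asymp T$, in $\|K_\eta\|_{L^1}$\nobreakdash-weighted average.

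The core of the argument is therefore the propagation estimate for $u_s$. Away from the boundary strip, Egorov's theorem together with the finite\nobreakdash-speed bound of Theorem~\ref{thm-8-5-6}\ref{thm-8-5-6-i} and non\nobreakdash-glancing ($|\xi|\ge c_0$) show that the part of $\psi_\gamma u_s$ coming from the ``bulk'' bicharacteristics --- which in one dimension run monotonically to $\{x=0\}$, reflect once, then escape to $+\infty$ --- is $O(\hbar^\infty)\|v\|$ for $s\ge C_1\gamma$, $C_1$ an absolute constant. What remains is the reflection of singularities off $\{x=0\}$ for the \emph{non-local} Dirichlet realization $B_\D$: since $\Lambda_m$ fails the transmission property (Remark~\ref{rem-8-5-1}\ref{rem-8-5-1-v}) one cannot simply invoke the classical reflected flow. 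Here I would use the $\mu$\nobreakdash-transmission structure and the Wiener--Hopf factorization of $(\xi^2+a^2)^{m/2}$ on the half\nobreakdash-line (Subsection~\ref{sect-8-A-2}) to show that an incoming wave on $\{|\xi|\ge c_0\}$ reflects to an outgoing wave on the same shell with comparable $\sL^2$\nobreakdash-mass, up to (i) a residual near the boundary whose overlap with $\psi_\gamma$ at time $s$ is bounded --- via the flux/energy identity $\partial_s\langle\psi_\gamma^2 u_s,u_s\rangle=i(mh)^{-1}\langle[B,\psi_\gamma^2]u_s,u_s\rangle$ together with the decay of the receding bulk packet into $\{x\le\gamma\}$ --- by $Cs^{-1}\gamma\|v\|$, and (ii) a microlocal error of norm $O(\hbar^{\delta'})\|v\|$ reflecting the limited Sobolev regularity available in the $\mu$\nobreakdash-transmission calculus. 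This reflection analysis for $B_\D$ is the hard part; it is where the non\nobreakdash-locality and the failure of the transmission property genuinely enter, and it is the step on which the whole estimate rests.

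Finally I would assemble the pieces in $\int K_\eta(T-s)\,\psi_\gamma u_s\,ds$: the bulk part contributes $O(\hbar^\infty)$ (its $s$\nobreakdash-support $[-C_1\gamma,C_1\gamma]$ is swept out by the rapid decay of $K_\eta$, using $\eta T\hbar^{-1}\ge \hbar^{-\delta}$ and $T\ge C_0$ with $C_0>2C_1$); the near\nobreakdash-boundary residual contributes at most $\|K_\eta\|_{L^1}\sup_{s\asymp T}Cs^{-1}\gamma\|v\|\le CT^{-1}\|v\|$; and the microlocal error contributes $\le C\hbar^{\delta'}\|v\|$ since $\phi(\eta^{-1}(hD_t-1))$ acts with operator norm $\lesssim 1$. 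Undoing the rescaling turns $CT^{-1}$ back into $CT^{-1}\gamma$ and $\hbar^{\delta'}$ into $h^{\delta'}$ (after relabelling $\delta'$, using $\hbar\le h^\delta$), which is exactly (\ref{8-5-10}).
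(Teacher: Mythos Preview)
Your outline identifies the correct picture (energy-shell localization, non-glancing normal speed, hence escape from the strip after one reflection), but the step you flag as ``the hard part'' is in fact the whole proof, and you have not supplied it. The Wiener--Hopf/$\mu$-transmission route you sketch would require building a genuine reflection parametrix for $e^{i(mh)^{-1}tB_\D}$ near $x_1=0$---this is not available in the paper and is not a consequence of Subsection~\ref{sect-8-A-2}, which only records H\"ormander--Grubb's static regularity statement. Likewise, your flux identity $\partial_s\langle\psi_\gamma^2u_s,u_s\rangle=i(mh)^{-1}\langle[B,\psi_\gamma^2]u_s,u_s\rangle$ does not by itself yield the $Cs^{-1}\gamma$ bound: the commutator $[B,\psi_\gamma^2]$ is a non-local operator with no obvious sign, and you are invoking ``decay of the receding bulk packet'' to control it, which is exactly what you are trying to prove. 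Finally, your Egorov step presupposes that away from a thin strip $B_\D$ behaves like a standard $\hbar$-PsDO; this is true only modulo a boundary-layer correction whose size is again the content of the proposition.

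The paper avoids all of this microlocal reflection analysis by a positive-commutator argument with the \emph{dilation generator} $L=x_1hD_1-ih/2$. The key algebraic fact is
\[
ih^{-1}[B_m,L]=m\bigl(B_m-a^2B_{m-2}\bigr),
\]
valid globally on the half-line for the Dirichlet realization. One then plugs $u=\phi(\eta^{-1}hD_t-1)e^{ih^{-1}tB}\psi_\gamma v$ into the identity
\[
\tfrac12\partial_t\bigl(kt\|u\|^2+\Re(Lu,u)\bigr)
=\Re\bigl((kt+L)u,\,u_t-ih^{-1}Bu\bigr)
-\tfrac12\bigl((ih^{-1}[B,L]u,u)-k\|u\|^2\bigr),
\]
whose left side has zero first term since $u$ solves the equation. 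For $m>2$ the operator inequality $B_{m-2}\le B_m^{(m-2)/m}$ (Corollary~\ref{cor-8-A-2}) together with the spectral localization $\|B_mu\|=(1+O(\eta))\|u\|$ makes $(ih^{-1}[B,L]u,u)\ge (m(1-a^2)-C\eta)\|u\|^2$, so with $k=m(1-a^2)$ one integrates to get $m(1-a^2)T\|u\|^2+\Re(Lu,u)\le C\gamma+C\eta T$. A matching upper bound $|\Re(Lu,u)|\le m(1-a^2+C\eta)T-\epsilon_0T\|\psi_\gamma u\|^2$ (from finite propagation speed plus $\|hD_1u\|\le(1-a^2)^{1/2}+C\eta$) then forces $\|\psi_\gamma u\|^2\le C(\gamma T^{-1}+\eta)$. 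For $0<m<2$ the sign of $B_m-a^2B_{m-2}$ is wrong and the paper localizes by $1=\psi_\sigma^2+\psi_\sigma^{\prime\,2}$, handles the far piece symbolically, and bounds the near piece $\Re(L\psi_\sigma u,\psi_\sigma u)$ using Grubb's $x_1^{(m-1)/2}$ boundary regularity; a Gr\"onwall-type integration in $t$ closes the argument. The point is that the dilation commutator gives an \emph{exact} monotonicity structure that sidesteps any parametrix construction at the boundary---this is the idea your proposal is missing.
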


\begin{proof}
Observe first that if for $u$ supported in $\bR^+$ and $L=x_1 hD_1-ih/2=L^*$
\begin{gather}
\Re i (B L u, u)= \frac{1}{2}  (i[B,L]   u,u)
\label{8-5-11}\\
\shortintertext{and then}
\Re (L u, u_t- ih^{-1}Bu)=\frac{1}{2}(ih^{-1}[B,L]u,u)+\frac{1}{2}\partial _t \Re (Lu,u)
\label{8-5-12}
\end{gather}
and
\begin{multline}
\Re (k tu+ Lu, u_t-ih^{-1}Bu)=\\
\frac{1}{2}\partial_t \bigl(k t\|u\|^2 + (Lu,u)\bigr) +
\frac{1}{2}\bigl( (ih^{-1}[B,L]  u,u)-k\|u\|^2\bigr).
\label{8-5-13}
\end{multline}
Let us plug
\begin{equation}
u=\phi (\eta^{-1} hD_t-1)  e^{ih^{-1}TB}\psi_\gamma  v
\label{8-5-14}
\end{equation}
with $\|v\|=1$. Then the left hand expression in (\ref{8-5-13}) is $0$ and
\begin{equation}
\frac{1}{2}\partial_t \bigl(k t\|u\|^2 + (Lu,u)\bigr)\le
\frac{1}{2}\bigl( -(ih^{-1}[B,L]  u,u)+k\|u\|^2\bigr).
\label{8-5-15}
\end{equation}
Let us estimate from above the right-hand expression; obviously
\begin{equation}
ih^{-1}[B_{m},L]=m (B_m - a^2 B_{m-2}).
\label{8-5-16}
\end{equation}
\begin{enumerate}[label=(\alph*), wide, labelindent=0pt]
\item\label{proof-8-5-9-a}
Assume first that $m>2$. Then since
\begin{gather}
(1-C\eta) \|u\| \le \|B_{m} u\| \le (1+C\eta) \|u\|
\label{8-5-17}\\
\intertext{due to  cutoff by $\phi$ and}
B_{m-2}\le B_{m}^{(m-2)/m}
\label{8-5-18}
\end{gather}
in virtue of Corollary~\ref{cor-8-A-2} we conclude that as $k=m( 1-a^2) $ the right-hand expression does not exceed $C\eta$ and therefore
\begin{equation}
m(1-a^2) t\|u\|^2 + (Lu,u) \le C\gamma +C\eta T
\label{8-5-19}
\end{equation}
since the value of the left-hand expression as $t=0$ does not exceed $C\gamma$.

On the other hand, observe that on the energy levels from $(1-C_0\eta,1+C_0\eta)$\\ the singularities propagate with a speed (with respect to $x_1$) not exceeding $m(1-a^2)^{\frac{1}{2}}(1+C_0\eta)$. Therefore we conclude that $u$ is negligible as ${|x_1|\ge m(1-a^2)^{\frac{1}{2}}(1+C_0\eta) T+C\gamma}$ and therefore since
\begin{gather}
\|D_1 u\|\le (B_2u,u) \le ((1-a^2)^{\frac{1}{2}}+C_0\eta)
\label{8-5-20}\\
\intertext{we conclude using (\ref{8-5-17}) and (\ref{8-5-18}) that}
|(Lu,u)|\le  m (1-a^2+C_0\eta )T+ C\gamma -\epsilon_0 T\|\psi_\gamma (x_1)u\|^2
\label{8-5-21}
\end{gather}
and the left-hand expression of (\ref{8-5-19})  is greater than
$\epsilon_0 T\|\psi_\gamma u\|^2 - C(\eta T+\gamma)$ and we arrive to (\ref{8-5-10}).
\item\label{proof-8-5-9-b}
Assume now that $0<m<2$. Then our above proof fails short in both estimating $\Re ih^{-1}([B,L]u,u)$ from below and $|(Lu,u)|$ from above and we need to remedy it.

First, away from $x_1=0$ only symbols are important and therefore the right-hand expression of (\ref{8-5-15}) does not exceed
$\frac{m}{2}(a^m -a^2) \|\psi_\sigma u\|^2+ C h^2\sigma^{-2}$ since $\|B_{m-2}\|\le a^{m-2}$. Indeed, we need just to decompose
$1=\psi_\sigma^2 + \psi_\sigma^{\prime\,2}$ and use our standard arguments to rewrite the right-hand expression of (\ref{8-5-15}) as the sum of the same expressions for $\psi_\sigma u$ and $\psi'_\sigma u$ plus
$C h^2\sigma^{-2}\|u\|^2\|$.

Similarly we deal
$\Re (Lu,u)=\Re (L\psi_\sigma u, \psi_\sigma u)+
\Re (L\psi'_\sigma u, \psi'_\sigma u)$ and the absolute value of the second term does not exceed
$m(1-a^2)^{\frac{1}{2}}T\|B_m^{1/m}\psi'_\sigma \| ^2$.

We claim that
\begin{equation}
\Re (L\psi_\sigma u, \psi_\sigma u)\le
C\sigma   \|\psi_\sigma u\|^2 + C\sigma h^\delta;
\label{8-5-22}
\end{equation}
we prove it late but now instead of (\ref{8-5-19}) we arrive as
$\sigma = \epsilon_0t$ to
\begin{equation*}
((1-a^2)-\epsilon) P(T) \le
Ch^{\delta'} +   (a^m-a^2) T^{-1} \int_\gamma^T P(t)\,dt +C\gamma T^{-1}
\end{equation*}
with $P(t)=\|\psi_{\epsilon_0t} u(.,t)\|^2$. Then
since $\nu =(a^m-a^2)/((1-a^2)-\epsilon)<0$ which implies (\ref{8-5-10}) again.
\end{enumerate}
\end{proof}

\begin{proof}[Proof of \textup{(\ref{8-5-22})}]
Indeed, as $h=1$, $\|B_m^{1/m} u\|\le 1$ we from G.~Grubb~\cite{grubb:mu-trans, grubb:frac} conclude that
$|u(x_1)|\le Cx_1^{(m-1)/2}\|u B_m^{1/m}u\|$ and
$|Lu(x_1)|\le C x_1^{(m-1)/2}$ and therefore $|(Lu,u)|\le C\sigma^m \|B_m^{1/m}u\|^2$. Take $\sigma=1$.

Scaling returns (\ref{8-5-22}) as $\sigma=h$.
\end{proof}

\begin{proposition}\label{prop-8-5-10}
Let  $\Lambda=\Lambda_{m,X}$ be a  $d$-dimensional operator \textup{(\ref{8-5-1})} on the half-space
$X=\{x\in \bR^d, x_1>0\}$ with Euclidean metrics ($d\ge 2$) and $A=h^m\Lambda^m-1$.

Let $\psi \in \sC^\infty _0 ([-1,1])$, $\psi_\gamma (x)=\psi (x_1/\gamma)$,
$\phi \in \sC_0^\infty([-1,1])$, $\varphi \in \sC_0^\infty(\bR^{d-1})$ supported in $\{|\xi'|\le 1-\epsilon\}$ with $\epsilon>0$. Finally, let
$\gamma\ge h^{1-\delta}$, $T\ge C h^{-\delta}\gamma$,
$h^\delta\ge \eta\ge h^{1-\delta}T^{-1}$. Then
\begin{equation}
\|\phi (\eta^{-1} hD_t) \varphi (hD') \psi_\gamma (x_1) e^{ih^{-1}tA}\psi|_{t=T}\| = O(h^s)
\label{8-5-23}
\end{equation}
with arbitrarily large $s$.
\end{proposition}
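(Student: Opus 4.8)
The plan is to reduce the $d$-dimensional statement to the one-dimensional pilot model of Proposition~\ref{prop-8-5-9} by a partial Fourier transform in $x'$ together with a partition of unity in $\xi'$. First I would note that on the operator symbol level $A=h^m\Lambda^m-1$ is, modulo the transmission-defect terms near $x_1=0$, a function of $h^2D_1^2+|hD'|^2$; since the cutoff $\varphi(hD')$ localizes to $\{|\xi'|\le 1-\epsilon\}$ and $\phi(\eta^{-1}hD_t)$ localizes the energy to an $O(\eta)$-neighborhood of $0$ (i.e. $h^m\Lambda^m\asymp 1$), on the relevant region one has $|hD_1|^2\asymp 1-|\xi'|^2\ge \epsilon(2-\epsilon)>0$, so $\xi_1$ is separated from zero. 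Freezing $\xi'=\omega$ with $|\omega|\le 1-\epsilon$ and writing $a=\langle\omega\rangle$ (so $0\le a\le\sqrt{1-\epsilon(2-\epsilon)}<1$), the operator $A$ restricted to that fiber becomes, up to negligible errors, the pilot operator $B_{m,a,h}$ shifted in energy; this is exactly the regime $0\le a\le 1-\tau_0$ covered by Proposition~\ref{prop-8-5-9}.

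The key steps, in order, are: \textbf{(i)} decompose $\varphi(hD')=\sum_\nu \varphi_\nu(hD')$ into pieces $\varphi_\nu$ supported in balls of radius $\asymp h^\delta$ (or just apply the calculus globally — a single piece suffices if one keeps $a$ as a symbol rather than a constant, but freezing is cleaner for invoking the pilot model), so that on $\supp\varphi_\nu$ the symbol $a(\xi')$ is constant to within $O(h^\delta)$; \textbf{(ii)} on each piece, conjugate by the partial Fourier transform in $x'$ so that $A$ acts as the family (parametrized by $\xi'$) of one-dimensional operators $(\,(h^2D_1^2+a(\xi')^2)^{m/2}\,)_\D$ minus $1$, with the $\xi'$-dependence of the lower-order and transmission terms treated as a parameter-dependent perturbation that commutes to leading order with the propagator; \textbf{(iii)} apply Proposition~\ref{prop-8-5-9} fiberwise with $\gamma$, $T$, $\eta$ as in the present statement (the hypotheses $\gamma\ge h^{1-\delta}$, $T\ge Ch^{-\delta}\gamma$, $h^\delta\ge\eta\ge h^{1-\delta}T^{-1}$ are chosen precisely so that the pilot-model hypotheses hold uniformly in $\xi'$), obtaining the bound $CT^{-1}\gamma+Ch^{\delta'}$ on each fiber; \textbf{(iv)} observe that $CT^{-1}\gamma\le Ch^\delta$ by $T\ge Ch^{-\delta}\gamma$, so the fiberwise bound is $O(h^{\delta'})$, and then bootstrap: since $\delta$ is arbitrary and the argument may be iterated (propagating the already-established smallness forward in time, or re-running the energy localization at a finer scale), the $O(h^{\delta'})$ can be upgraded to $O(h^s)$ for any $s$. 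Step (iv)'s bootstrap is the standard "finite speed of propagation plus arbitrarily small $\delta$ gives $O(h^s)$" mechanism: the singularities, travelling at speed $\asymp(1-a^2)^{1/2}\ge c>0$ away from the boundary (Theorem~\ref{thm-8-5-6}\ref{thm-8-5-6-ii} and the pilot-model analysis), leave the strip $\{x_1\le\gamma\}$ by time $\asymp\gamma$, and repeated application drives the residual mass below any power of $h$.

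The main obstacle is step (ii): controlling the $\xi'$-dependence of the transmission-defect (the $\mu$-transmission correction of Grubb) uniformly, and making sure that the commutator of the one-dimensional propagator with the $\xi'$-frozen symbol is genuinely negligible rather than merely $O(h^\delta)$. Concretely, one must check that the multiplier $L=x_1hD_1-ih/2$ used in the pilot-model energy estimate still satisfies the identity $ih^{-1}[A,L]=m(A+1-2|\xi'|^2\langle A\rangle^{-\cdots})+(\text{l.o.t.})$ with lower-order terms that are $O(\eta)$ on the energy shell, uniformly in $\xi'$ on $\{|\xi'|\le 1-\epsilon\}$ — this is where the separation $|hD_1|\ge c$ is essential, since it keeps $(h^2D_1^2+a^2)^{(m-2)/2}$ bounded and smooth even for $m<2$, avoiding the boundary-singularity issue that forced the delicate $\psi_\sigma$-splitting in case~\ref{proof-8-5-9-b} of Proposition~\ref{prop-8-5-9}. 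Once that uniformity is in hand, the rest is the routine Tauberian/propagation bookkeeping, which I would leave to the reader in the same spirit as the preceding proofs.
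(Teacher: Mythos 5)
Your overall architecture matches the paper's: partial Fourier transform $F_{x'\to h^{-1}\xi'}$ to reduce to the one-dimensional pilot operator $B_{m,a,h}$ with $a=|\xi'|$, one application of Proposition~\ref{prop-8-5-9} to get an $O(h^{\delta'})$ bound, then an iteration to upgrade $h^{\delta'}$ to $h^{s}$. Two comments, one minor and one substantive.

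The minor one: your step (ii) solves a problem that is not there. On the half-space with Euclidean metric the operator commutes exactly with translations in $x'$, so the partial Fourier transform diagonalizes $A$ exactly into the family $((h^2D_1^2+|\xi'|^2)^{m/2})_\D-1$; there is no residual $\xi'$-dependent perturbation, no partition of unity in $\xi'$ is needed, and no uniformity of a commutator identity has to be re-checked --- all that is required is that the bound of Proposition~\ref{prop-8-5-9} is uniform in $a\in[0,1-\tau_0]$, which the cutoff $\varphi$ guarantees. Likewise you do not get to sidestep case~\ref{proof-8-5-9-b} of that proposition: the fiber operator near $x_1=0$ carries the full difficulty for $0<m<2$ regardless of any $\xi_1$-localization; one simply invokes Proposition~\ref{prop-8-5-9} as a black box.

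The substantive one: your step (iv), which is the actual content of the proof, is not a proof as written. ``Since $\delta$ is arbitrary'' gives nothing --- shrinking $\delta$ does not improve $h^{\delta'}$ to $h^{s}$ --- and ``re-running the energy localization at a finer scale'' is not a mechanism. The correct mechanism, and the one the paper uses, is that the gains \emph{multiply} across successive time intervals, and to see that they multiply one needs a decomposition in $\xi_1$ at each step: split $e^{ih^{-1}TA}\psi_\gamma$ by cutoffs $Q^{+},Q^{0},Q^{-}$ supported in $\{\xi_1>\epsilon\}$, $[-2\epsilon,2\epsilon]$, $\{\xi_1<-\epsilon\}$. The outgoing part $Q^{+}(1-\psi_\gamma)(\cdots)$ is negligible by propagation, the part $Q^{0}(1-\psi_\gamma)(\cdots)$ by ellipticity on the energy shell, and the outgoing-after-reflection part is negligible for $t\ge T$; only $\psi_\gamma e^{ih^{-1}(t-T)A}Q^{-}\psi_\gamma e^{ih^{-1}TA}\psi_\gamma$ survives, and each of the two factors sandwiched between $\psi_\gamma$'s is again $O(h^{\delta'})$ by the already-established estimate, giving $O(h^{2\delta'})$ at time $2T$. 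Iterating $n=s/\delta'$ times on intervals of length $T/n$ (still admissible because $T\ge Ch^{-\delta}\gamma$) produces the factor $(\gamma n/T)^{n}=O(h^{s})$. Without this decomposition your ``repeated application'' only re-proves the same $O(h^{\delta'})$ bound on a longer interval; the compounding is what you must exhibit, and that is the gap.
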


\begin{proof}
By making Fourier transform $F_{x'\to h^{-1}\xi'}$ we reduce the general case to $d=1$ and operator $B$.

According to Proposition~\ref{prop-8-5-9}
$\|\phi (\eta^{-1} hD_t)  \psi _\gamma e^{ih^{-1}TA}\psi\|\le h^\delta$.  Thus for $s=\delta'$ (\ref{8-5-23}) has been proven (we reduce $\delta'$ if necessary).

Without any loss of the generality we assume that $\psi (x_1)=1$ as $x_1\le 1$, $\psi (x_1) =0$ as $x_1\ge 2$.

Observe that due to propagation as $t\le T$ and $x_1\ge \gamma$ we see that
$\phi (\eta^{-1} hD_t)  Q^+(hD_1)(1-\psi_\gamma) e^{ih^{-1}TA}\psi_\gamma$ is negligible where $Q^\pm\in \sC^\infty (\bR)$ is supported in
$\{\pm \xi_1> \epsilon \}$.

Furthermore, from the standard ellipticity arguments we conclude that
$\phi (\eta^{-1} hD_t)  Q^0(hD_1)(1-\psi_\gamma) e^{ih^{-1}TA}\psi_\gamma$ is also negligible for $Q^0\in \sC_0^\infty ([-2\epsilon,2\epsilon])$.

Finally, due to propagation as $t\ge T$ and $x_1\ge \gamma$ we conclude that
$\phi (\eta^{-1} hD_t)  \psi_\gamma e^{ih^{-1}(t-T)A}Q^-(hD_1)(1-\psi_\gamma)
e^{ih^{-1}TA }\psi_\gamma$ is negligible as $t\ge T$.

What is left is   $\phi (\eta^{-1} hD_t)  \psi_\gamma e^{ih^{-1}(t-T)A}Q^-(hD_1)\psi_\gamma e^{ih^{-1}TA }\psi_\gamma$ and since (\ref{8-5-23}) holds for $s=\delta'$ we conclude that it holds for $s=2\delta'$ and $T$ replaced by $2T$.

Continuing this process we see that (\ref{8-5-23}) holds for $s=n\delta'$ and $T$ replaced by $nT$. Therefore, as we redenote $nT$ by $T$ (and $T$ by $T/n$ respectively), we acquire factor $(\gamma n/T)^n$ in our estimate and it is $O(h^s)$ for any $s$ as $h$ is sufficiently small, $\gamma/T \le h^\delta$ and $n=s/\delta'$.
\end{proof}

\section{General case}
\label{sect-8-5-4-2}

\begin{theorem}\label{thm-8-5-11}
Let $(\bar{x},\bar{\xi})$ be a point on the energy level $1$. Consider a Hamiltonian trajectory $\Psi_t (\bar{x},\bar{\xi})$ with $\pm t\in [0, m T]$ (one sign only) with $T\ge \epsilon_0$  and assume that for each $t$ indicated it meets $\partial X$ transversally i.e.
\begin{multline}
\dist (\uppi_x \Psi_t (x,\xi),\partial X) \le \epsilon
\implies\\
 |\frac{d\ }{dt}\dist (\uppi_x \Psi_t (x,\xi),\partial X)|\ge \epsilon \qquad \forall t: \pm t\in [0, mT].
\label{8-5-24}
\end{multline}
Also assume that
\begin{equation}
\dist (\uppi_x \Psi_t (x,\xi),\partial X)\ge \epsilon_0 \qquad \text{as\ \ }
t=0, \ \pm t=mT.
\label{8-5-25}
\end{equation}
Let $\epsilon >0$ be a small enough constant, $Q$ be supported in $\epsilon$-vicinity of $(x,\xi)$ and $Q_1\equiv 1$ in $C_0\epsilon$-vicinity of $\Psi_t(x,\xi)$ as $t=\pm mT$. Then operator  $(I-Q_1) e^{-ih^{-1}tH}Q$ is negligible as $t=\pm mT$.
\end{theorem}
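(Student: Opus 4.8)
The plan is to decompose the evolution along the broken (billiard) trajectory into boundedly many elementary pieces, each either a purely interior propagation or a single transversal reflection, and to treat the former by the ordinary $h$-pseudodifferential calculus and the latter by the pilot-model estimate of Proposition~\ref{prop-8-5-10}. Concretely, I would use the group property of $e^{-ih^{-1}tH}$ together with a microlocal partition of unity subordinate to a finite cover of the arc $\{\Psi_t(\bar{x},\bar{\xi}):\pm t\in[0,mT]\}$ by phase-space boxes of radius $\asymp\epsilon$, each of which either stays at distance $\ge\epsilon$ from $\partial X$ or is centred at a single transversal intersection point with $\partial X$. By~(\ref{8-5-24}) the arc meets $\partial X$ in finitely many points, all transversally, and these occur at times separated by gaps bounded below, so the number of boxes is $O(T\epsilon^{-1})$ and in particular independent of $h$; by~(\ref{8-5-25}) its two endpoints lie at distance $\ge\epsilon_0$ from $\partial X$. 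It then suffices to prove the assertion when the arc either (a) stays at distance $\ge\epsilon$ from $\partial X$ throughout, or (b) starts and ends at distance $\ge\epsilon$ from $\partial X$ and meets it transversally exactly once in between.

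In case (a), on the relevant phase-space neighbourhood $\Lambda_{m,X}$ coincides modulo a negligible operator with the genuine pseudodifferential operator $\Delta^{m/2}$ --- its Schwartz kernel is smooth off the diagonal, while $R_X$ and $\uptheta_X$ are harmless away from $\partial X$ --- so $H$ is there a self-adjoint $h$-pseudodifferential operator with real principal symbol $g(x,\xi)^{m/2}-1$, whose Hamilton flow on the energy level is the trajectory $\Psi_t$. The standard propagation of singularities for such operators, via positive commutator estimates exactly as in the proof of Theorem~\ref{thm-8-5-6}, then gives that $(I-Q_1)e^{-ih^{-1}tH}Q$ is negligible, since $\Psi_t$ carries the microsupport of $Q$ into the set where $Q_1\equiv1$.

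In case (b) I would straighten the boundary as in~(\ref{8-5-6}) and follow the packet through three phases. During the \emph{approach}, interior propagation along the approaching part of the arc (case (a)) carries $Q$ into a box contained in $\{x_1\le2\epsilon\}$ on which, by transversality~(\ref{8-5-24}), $\xi_1<0$ and $|\xi_1|$ is bounded away from $0$ --- equivalently $|\xi'|$ is bounded away from $1$, which is exactly the support hypothesis on $\varphi$ in Proposition~\ref{prop-8-5-10}. During the \emph{reflection}, which lasts a time interval of length only $O(\epsilon)$, the tangential momentum $\xi'$ and the energy drift by just $O(\epsilon)=o(1)$, so the cutoffs $\varphi(hD')$ and $\phi(\eta^{-1}hD_t)$ required by Proposition~\ref{prop-8-5-10} may be inserted legitimately; that proposition, applied at scale $\gamma=h^{1-\delta}$ and, via the scaling relations of Proposition~\ref{prop-8-5-8}, at every intermediate scale $h^{1-\delta}\le\gamma\le\epsilon$ with effective semiclassical parameter $h/\gamma\ll1$, shows that after time $O(\epsilon)$ the state has left $\{x_1\le\epsilon\}$ modulo $O(h^s)$. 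The only branch it can then occupy is the reflected one --- $x_1\ge\epsilon$ with $\xi_1>0$ bounded away from $0$, the incoming region $\{\xi_1<0\}$ having been exhausted by forward propagation --- and conservation of $\xi'$ and of the energy identifies it with $\Psi_t$ continued past the reflection. During the \emph{departure}, interior propagation carries the reflected packet along $\Psi_t$ to $\Psi_{\pm mT}(\bar{x},\bar{\xi})$, where $Q_1\equiv1$, so once more $(I-Q_1)e^{-ih^{-1}tH}Q$ is negligible. Concatenating the boundedly many pieces completes the proof.

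The delicate step is the reflection phase, and within it the intermediate zone $\{h^{1-\delta}\le x_1\le\epsilon\}$: because $\Lambda_m$ genuinely lacks the transmission property, one cannot treat $\{x_1\le\epsilon\}$ as a single pseudodifferential regime, and Proposition~\ref{prop-8-5-10} applies directly only at the smallest scale $\gamma=h^{1-\delta}$ (its hypothesis $T\ge Ch^{-\delta}\gamma$ forbidding larger $\gamma$), so one must patch it with the dyadic rescaling and, at the bottom scale, with the one-dimensional Proposition~\ref{prop-8-5-9}. A second, more technical, point is to control the drift of the tangential momentum and of the energy --- of size $O(\epsilon)$, since the packet stays within distance $\epsilon$ of $\partial X$ only for a time $O(\epsilon)$ --- so that the cutoffs $\varphi(hD')$ and $\phi(\eta^{-1}hD_t)$ can be kept fixed throughout the near-boundary excursion; failing that one iterates Proposition~\ref{prop-8-5-10} with slightly widening supports and absorbs the resulting loss.
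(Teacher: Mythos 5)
Your proof follows the same skeleton as the paper's: reduce to a single transversal reflection with the boundary straightened as in (\ref{8-5-6}), use interior propagation of singularities to shepherd the packet to and from a thin boundary layer, and invoke the half-space pilot model (Proposition~\ref{prop-8-5-10}) for the reflection itself. Where you diverge is in the last, delicate step, and there your substitute leaves a gap. The paper does \emph{not} try to span the intermediate zone $h^{1-\delta}\le x_1\le\epsilon$ by rescaling Proposition~\ref{prop-8-5-10}; instead it zooms in two stages --- first to an isotropic $\varepsilon$-box with $\varepsilon=h^{\frac12-\delta'}$ and $T\asymp\varepsilon$, then to an \emph{anisotropic} box ($\varepsilon$ in $(x',\xi')$, $h^{1-3\delta'}$ in $x_1$, $h^{\delta'}$ in $\xi_1$) with $T\asymp\gamma=h^{1-\delta''}$ --- and on that tiny box compares the true operator with the constant-coefficient half-space model by the method of successive approximations with coefficients frozen at $\bar{x}$. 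That frozen-coefficient step is precisely what licenses applying Proposition~\ref{prop-8-5-10}, which is stated only for the exact Euclidean half-space operator, to the actual variable-coefficient $\Lambda_{m,X}$; the two-stage zoom is what makes the freezing error negligible. Your proposed device --- ``apply Proposition~\ref{prop-8-5-10} at every intermediate scale via the scaling relations of Proposition~\ref{prop-8-5-8}'' --- does not do this job, because those scaling identities hold only for the exact model $B_{m,a,h}$ and say nothing about the error incurred by replacing the true operator with the model in the zone $h^{1-\delta}\le x_1\le\epsilon$. You correctly flag this zone as the delicate point, but the missing ingredient is not dyadic rescaling: it is the successive-approximation comparison with the frozen-coefficient model on a box short enough (time $\asymp h^{1-\delta''}$) for the perturbation series to converge. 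With that replacement your argument matches the paper's; without it, the reflection phase is not actually reduced to the pilot model.
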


\begin{proof}
\begin{enumerate}[label=(\alph*), wide, labelindent=0pt]
\item\label{proof-8-5-11-a}
Obviously without any loss of the generality one can assume that there is just one reflection from $\partial X$ (and this reflection is transversal) and that (\ref{8-5-6}) is fulfilled in its vicinity.

Further,  without any loss of the generality one can assume that $Q$ is supported in $\varepsilon$-vicinity of $(\bar{x},\bar{\xi})$, $Q_1\equiv 1$ in  $\varepsilon$-vicinity of $\Psi_{mT} (\bar{x},\bar{\xi})$ and
$T\asymp \varepsilon$ with $\varepsilon=h^{\frac{1}{2}-\delta'}$. Then both $\bar{x}$ and $\uppi_x \Psi_{mT} (\bar{x},\bar{\xi})$ belong to $C_0\varepsilon$-vicinity of $\partial X$.

Indeed, it follows from the propagation inside of domain.

\item\label{proof-8-5-11-b}
Then instead of isotropic vicinities we can consider anisotropic ones: $\varepsilon$ with respect to $(x',\xi')$, $h^{1-3\delta'}$ with respect to $x_1$ and $h^{\delta'}$ with respect to $\xi_1$. Let now $Q$ and $Q_1$ be corresponding operators.

In this framework from the propagation inside of domain it follows that without any loss of the generality one can assume that
$T\asymp \gamma=h^{1-\delta''}$ and both $\bar{x}$ and
$\uppi_x \Psi_{mT} (\bar{x},\bar{\xi})$ belong to $C_0\gamma$-vicinity of $\partial X$.

\item\label{proof-8-5-11-c}
Then one can employ the method of the successive approximations freezing coefficients at point $\bar{x}$ and in this case the statement of the theorem follows from the construction of Section~\ref{book_new-sect-7-2} of \cite{futurebook}\footnote{\label{foot-8-23} Insignificant and rather obvious modifications are required.}   and  Proposition~\ref{prop-8-5-10}. We leave easy details to the reader.
\end{enumerate}
\end{proof}

Then we arrive immediately to
\begin{corollary}\label{cor-8-5-12}
Under standard non-periodicity condition  $\N^-(h)$  is given with $o(h^{1-d})$-error by the Tauberian expression with $T=h^{1-\delta}$.
\end{corollary}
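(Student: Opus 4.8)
The goal is to establish \textbf{Corollary~\ref{cor-8-5-12}}: under the standard non-periodicity condition, $\N^-(h)$ equals the Tauberian expression with $T=h^{1-\delta}$ up to an error $o(h^{1-d})$. The plan is to split phase space into three regions according to distance to the boundary, handle each separately, and then invoke the usual Tauberian machinery with a long time $T$.

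\emph{Plan of proof.} First, decompose $1 = \psi_\gamma(x_1) + (1-\psi_\gamma(x_1))$ with $\gamma = h^{1-\delta}$, i.e. split into the near-boundary strip $\{x_1 \le \gamma\}$ and the interior $\{x_1 \ge \gamma\}$ (after the standard flattening \textup{(\ref{8-5-6})} near $\partial X$). In the near-boundary strip, Corollary~\ref{cor-8-5-7}\ref{cor-8-5-7-ii} already tells us that the contribution of the zone $\{x_1\le \gamma=h^{1-\delta}\}$ to the Tauberian error with $T = h^{1-3\delta}$ does not exceed $Ch^{-d+1+\delta}$, which is $o(h^{1-d})$ after choosing $\delta$ appropriately; so in that zone we only need to pass from the Tauberian to the Weyl expression, which is the standard successive-approximation / stationary-phase computation (and is noted in the text right after Corollary~\ref{cor-8-5-7}). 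For the interior zone $\{x_1\ge \gamma\}$, I would further split by whether the Hamiltonian trajectory through $(x,\xi)$ hits $\partial X$ within time $mT$ or not. Along trajectories that stay in the interior (distance $\ge \epsilon_0$), one has the classical interior propagation and the usual non-periodicity argument applies verbatim. Along trajectories that reflect from the boundary, Theorem~\ref{thm-8-5-11} guarantees that reflection is ``clean'': $(I-Q_1)e^{-ih^{-1}tH}Q$ is negligible, where $Q_1\equiv 1$ near $\Psi_{mT}(x,\xi)$ — so the reflected flow behaves microlocally exactly like the billiard flow, with no loss of mass or spreading beyond $O(\gamma)$-scale. (One must also discard, as usual, the small-measure set of grazing/tangential trajectories and those with $|\xi_1|\le \varepsilon$, using Theorem~\ref{thm-8-5-6} and the remarks following Corollary~\ref{cor-8-5-7}.)

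\emph{Assembling the Tauberian estimate.} With clean reflection in hand, the billiard flow $\Phi_t$ on $S^*X$ is well-defined away from a null set, and the standard non-periodicity condition says that the set of periodic points of $\Phi_t$ with $|t|\le T$ has measure tending to $0$ as $T$ then $h^{-\delta}\to\infty$. Then the usual argument — estimate $\int \chi_T(t) e^{ih^{-1}t\tau} \hat e(t)\,dt$ via the propagation description, split into $|t|\le \bar T$ (bounded, contributing the Weyl two-term formula) and $\bar T \le |t|\le T$ (where non-periodicity gives $o(1)$ after integration) — yields that the Tauberian expression with $T = h^{1-\delta}$ differs from $\N^-(h)$ by $o(h^{1-d})$. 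Technically this is the content of the Tauberian theorem of Chapter~\ref{book_new-sect-7-2} (or the relevant Tauberian section) of \cite{futurebook}, now applicable because Theorem~\ref{thm-8-5-11} supplies precisely the propagation input it requires for the fractional operator. One combines the near-boundary contribution ($o(h^{1-d})$ by Corollary~\ref{cor-8-5-7}\ref{cor-8-5-7-ii}), the interior non-reflecting contribution, and the reflecting contribution, each controlled as above, to conclude.

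\emph{Main obstacle.} The delicate point is the reflected zone: ensuring that the billiard dynamics for $\Lambda_{m,X}$, whose kernel near the boundary is governed by the $\mu$-transmission structure rather than a clean boundary condition, still produces the classical reflection law with negligible error. This is exactly where the pilot model (Propositions~\ref{prop-8-5-8}--\ref{prop-8-5-10}) and Theorem~\ref{thm-8-5-11} do the heavy lifting, and once one is allowed to cite Theorem~\ref{thm-8-5-11}, the remainder is bookkeeping: matching the anisotropic scales ($\varepsilon$ in $(x',\xi')$, $h^{1-3\delta'}$ in $x_1$, $h^{\delta'}$ in $\xi_1$) used there with the zones in which the Tauberian estimate is applied, and verifying that the discarded grazing set indeed contributes $o(h^{1-d})$. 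I would therefore present the corollary as an essentially immediate consequence of Theorem~\ref{thm-8-5-11} together with Corollary~\ref{cor-8-5-7} and the standard Tauberian-plus-non-periodicity argument, leaving the routine measure-theoretic and scale-matching details to the reader, as the surrounding text does.
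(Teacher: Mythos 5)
Your proposal is correct and follows exactly the route the paper intends: the paper's own ``proof'' is just ``Easy details are left to the reader,'' with the corollary presented as an immediate consequence of Theorem~\ref{thm-8-5-11} together with Corollary~\ref{cor-8-5-7} and the standard Tauberian/non-periodicity argument, which is precisely the decomposition (near-boundary strip, interior non-reflecting trajectories, reflecting trajectories, discarded grazing set) you spell out. Your write-up is a faithful and reasonable elaboration of the details the author omits.
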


\begin{proof}
Easy details are left to the reader.
\end{proof}

\chapter{Main results}
\label{sect-8-5-5}

\section{From Tauberian to Weyl asymptotics}
\label{sect-8-5-5-1}

Now we can apply the method of successive approximations as described in Section~\ref{book_new-sect-7-2}\,\footref{foot-8-23} and prove that for operator $A$ the Tauberian expression with $T=h^{1-\delta}$ (with sufficiently small $\delta>0$) equals to Weyl expression $\N^\W_h$ with $O(h^{2-d-\delta''})$ error,
\begin{equation}
\N^\W_h = \kappa_0 h^{-d} + \kappa_{1,m} h^{1-d}+o(h^{1-m})
\label{8-5-26}
\end{equation}
with the standard coefficient $\kappa_0 =(2\pi)^{-d}\varpi_d\Vol_d (X)$ and with
\begin{equation}
\kappa_{1,m}= (2\pi)^{1-d}\varpi_{d-1} \varkappa_{m}\Vol_{d-1}(\partial X)
\label{8-5-27}
\end{equation}
where
\begin{multline}
\varkappa_m  =\\
=\frac{d-1}{m}
\iint_1^\infty \lambda ^{-(d-1)/m-1}
\Bigl( \mathbf{e}_m (x_1,x_1,\lambda) - \pi ^{-1}(\lambda-1)^{1/m}\Bigr)\,dx_1d \lambda
\label{8-5-28}
\end{multline}
with $\mathbf{e}_{m}(x_1,y_1,\tau)=\mathbf{e}_{m,1,1}(x_1,y_1,\tau)$ the Schwartz kernel of the spectral projector of operator
$\mathbf{a}_m\Def B_{m,1,1}$ introduced by (\ref{8-5-8}):
\begin{equation}
\mathbf{a}_m = ((  D_x^2+1)^{m/2})_\D
\label{8-5-29}
\end{equation}

Recall that $\Vol_d$ and $\Vol_{d-1}$ are Riemannian volumes corresponding to metrics $g$ and its restriction to $\partial X$ respectively and
$\pi ^{-1}(\lambda-1)^{1/m}$ is a Weyl approximation to $\mathbf{e}_{m,1}(x_1,x_1,\lambda)$.

Thus we arrive to
\begin{theorem}\label{thm-8-5-13}
Under standard non-periodicity condition the following asymptotics holds:
\begin{equation}
\N(\tau) = \kappa_0 \tau^{\frac{d}{m}} + \kappa_{1,m} \tau^{\frac{d-1}{m}} +o(\tau^{\frac{d-1}{m}}) \qquad\text{as\ \ }\tau\to +\infty.
\label{8-5-30}
\end{equation}
\end{theorem}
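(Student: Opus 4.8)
The plan is to assemble the proof of Theorem~\ref{thm-8-5-13} by combining the pieces already established, reverting from the semiclassical parameter $h=\lambda^{-1/m}$ (equivalently $h=\tau^{-1/m}$) back to the spectral variable. First I would invoke Corollary~\ref{cor-8-5-12}: under the standard non-periodicity condition the counting function $\N^-(h)$ for $A=h^m\Lambda_{m,X}-1$ coincides, up to $o(h^{1-d})$, with the Tauberian expression computed with $T=h^{1-\delta}$. Next I would feed this into the ``from Tauberian to Weyl'' analysis of Section~\ref{sect-8-5-5-1}: the method of successive approximations (freezing the metric at boundary points and using the pilot-model kernel $\mathbf{e}_m$) shows that this Tauberian expression equals the Weyl expression $\N^\W_h$ of (\ref{8-5-26}) with an error $O(h^{2-d-\delta''})$, which is $o(h^{1-d})$ for $\delta''$ small. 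Hence
\begin{equation*}
\N^-(h)=\kappa_0 h^{-d}+\kappa_{1,m}h^{1-d}+o(h^{1-d})
\end{equation*}
with $\kappa_0=(2\pi)^{-d}\varpi_d\Vol_d(X)$ and $\kappa_{1,m}$ given by (\ref{8-5-27})--(\ref{8-5-28}).

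The second step is purely bookkeeping: translate $h\to\tau$. Since $h^m\Lambda_{m,X}-1\le 0$ is equivalent to $\Lambda_{m,X}\le h^{-m}=\tau$, we have $\N(\tau)=\N^-(h)\big|_{h=\tau^{-1/m}}$, and substituting $h^{-d}=\tau^{d/m}$, $h^{1-d}=\tau^{(d-1)/m}$ into the displayed asymptotics yields exactly (\ref{8-5-30}). One should check that the non-periodicity condition for the Hamiltonian flow of $g^{m/2}$ on the energy surface is the hypothesis being quoted, and that the two-term inner-zone contribution (the $\kappa_0 h^{-d}$ and the part of $\kappa_{1,m}h^{1-d}$ coming from $\dist(x,\partial X)\ge\varepsilon$, controlled via the propagation-and-reflection statement of Theorem~\ref{thm-8-5-11}) matches the near-boundary contribution organized through the model operator $\mathbf{a}_m$.

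The only genuine content beyond citation is verifying that the successive-approximation construction of Section~\ref{book_new-sect-7-2} of \cite{futurebook} indeed goes through here despite the absence of the transmission property, i.e. that the $\mu$-transmission property suffices to make the parametrix construction and the error estimates work near $\partial X$; this is where one uses G.~Grubb's regularity results (the $x_1^{(m-1)/2}$ boundary behaviour already exploited in the proof of (\ref{8-5-22})) to bound the boundary-layer terms, and where Proposition~\ref{prop-8-5-10} guarantees that singularities launched from the near-boundary strip with $|\xi'|\le 1-\epsilon$ do not return. I expect this verification of the applicability of the machinery (rather than any new estimate) to be the main obstacle; once it is in place the theorem follows by the substitution above.

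\begin{proof}
By Corollary~\ref{cor-8-5-12}, under the standard non-periodicity condition $\N^-(h)$ equals the Tauberian expression with $T=h^{1-\delta}$ up to $o(h^{1-d})$. By the analysis of Section~\ref{sect-8-5-5-1} (method of successive approximations, freezing coefficients at boundary points, together with the propagation and reflection results of Theorem~\ref{thm-8-5-11} and Proposition~\ref{prop-8-5-10}, and G.~Grubb's boundary regularity), this Tauberian expression equals $\N^\W_h$ of (\ref{8-5-26}) with an $O(h^{2-d-\delta''})$ error. Therefore
\begin{equation*}
\N^-(h)=\kappa_0 h^{-d}+\kappa_{1,m}h^{1-d}+o(h^{1-d}),
\end{equation*}
with $\kappa_0$ and $\kappa_{1,m}$ as in (\ref{8-5-27})--(\ref{8-5-28}). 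Since $\Lambda_{m,X}\le \tau$ is equivalent to $h^m\Lambda_{m,X}-1\le 0$ for $h=\tau^{-1/m}$, we have $\N(\tau)=\N^-(\tau^{-1/m})$; substituting $h^{-d}=\tau^{d/m}$ and $h^{1-d}=\tau^{(d-1)/m}$ gives (\ref{8-5-30}).
\end{proof}
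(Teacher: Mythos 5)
Your proposal is correct and follows essentially the same route as the paper: the paper's own proof simply establishes $\N^-_h=\kappa_0h^{-d}+\kappa_{1,m}h^{1-d}+o(h^{1-d})$ via Corollary~\ref{cor-8-5-12} and the Tauberian-to-Weyl passage of Section~\ref{sect-8-5-5-1}, and then notes that this immediately yields (\ref{8-5-30}) under the substitution $h=\tau^{-1/m}$. Your added remarks about verifying the successive-approximation machinery in the absence of the transmission property are a fair observation, but the paper treats that step exactly as you do --- by citation to Section~\ref{book_new-sect-7-2} of \cite{futurebook} together with Proposition~\ref{prop-8-5-10}.
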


\begin{proof}
First we establish as described above asymptotics
\begin{equation}
\N^-_h = \kappa_0 h^{-d} + \kappa_{1,m} h^{1-d} +o(h^{1-d}) \qquad
\text{as\ \ }h\to +0
\label{8-5-31}
\end{equation}
which immediately implies (\ref{8-5-30}) .
\end{proof}

\section{Discussion}
\label{sect-8-5-5-2}

The following problems seem to be challenging

\begin{Problem}\label{Problem-8-5-14}
As $m_1>0$, $m_2>0$ consider
\begin{equation}
K=K_{m_1,m_2,X}\Def \Lambda_{m,X}-\Lambda_{m_1,X}\Lambda_{m_2,X}
\label{8-5-32}
\end{equation}
on $\fD(\Lambda_m)$ with $m=m_1+m_2$. From Corollary~\ref{cor-8-A-2} we conclude that this is non-negative operator; obviously singularities of its Schwartz kernel $K(x,y)$ belong to $\partial X \times \partial X$. Prove that
\begin{enumerate}[label=(\roman*), wide, labelindent=0pt]
\item\label{Problem-8-5-14-i}
$K$ a positive operator.
\item\label{Problem-8-5-14-ii}
As $X=\{x\in \bR^d:\,x_1>0\}$ with Euclidean metrics its Schwartz kernel $K(x,y)= k(x_1,y_1,x'-y')$ which is positive homogeneous of degree $-m-d$ satisfies
\begin{equation}
|D^\alpha_{x} D^\beta_{y}K(x,y)|\le
C_{\alpha\beta} (x_1+y_1)^{-m-\alpha_1-\beta_1} (x_1+y_1+|z|)^{-d-|\alpha'|+|\beta'|}.
\label{8-5-33}
\end{equation}
\item\label{Problem-8-5-14-iii}
In the general case in the local coordinates in which $X=\{x:\,x_1>0\}$  and $x_1=\dist(x,\partial X)$  not only (\ref{8-5-33}) holds but also
\begin{multline}
|D^\alpha_{x} D^\beta_{y}\bigl(K(x,y)-K^0(x,y)\bigr)|\\
\le
C_{\alpha\beta} (x_1+y_1)^{-m-\alpha_1-\beta_1} (x_1+y_1+|z|)^{-d-|\alpha'|+|\beta'|+1}
\label{8-5-34}
\end{multline}
where $K^0(x,y)= k(x_1,y_1, x'-y')$ and $g^{jk}=\updelta_{jk}$ at point $(0,\frac{1}{2}(x'+y')$.
\end{enumerate}
\end{Problem}

\begin{Problem}\label{Problem-8-5-15}
As $m>0$, $n>0$ consider operator
\begin{equation}
K =K_{m,n,X}\Def \Lambda_{m,X}- \Lambda_{n,X}^{m/n}
\label{8-5-35}
\end{equation}
on $\fD(\Lambda_k)$ with $k=\max(m,n)$. Then this is non-negative (no-positive) operator as $m>n$ ($m<n$ respectively). Prove that
\begin{enumerate}[label=(\roman*), wide, labelindent=0pt]
\item\label{Problem-8-5-15-i}
$K$ a positive (negative) operator as $n>m$ ($n<m$ respectively).

\item\label{Problem-8-5-15-ii}
As $X=\{x\in \bR^d:\,x_1>0\}$ with Euclidean metrics its Schwartz kernel $K(x,y)= k(x_1,y_1,x'-y')$ which is positive homogeneous of degree $-m-d$ satisfies (\ref{8-5-33}).

\item\label{Problem-8-5-15-iii}
In the framework of Problem~\ref{Problem-8-5-14}\ref{Problem-8-5-14-iii} both
(\ref{8-5-33}) and (\ref{8-5-34}) hold.
\end{enumerate}
\end{Problem}

\begin{Problem}\label{Problem-8-5-16}
\begin{enumerate}[label=(\roman*), wide, labelindent=0pt]
\item\label{Problem-8-5-16-i}
Consider operators $(\Delta^{m/2})_\D$ with $m<0$ and the asymptotics of eigenvalues tending to $+0$.

\item\label{Problem-8-5-16-ii}
Consider operators with degenerations like $A_{m,X}= h^m \Lambda_{m,X} + V(x)$.

\item\label{Problem-8-5-16-iii}
Consider more general operators where instead of $\Delta$ general elliptic (matrix) operator is used.
\end{enumerate}
\end{Problem}

\begin{Problem}\label{Problem-8-5-17}
\begin{enumerate}[label=(\roman*), wide, labelindent=0pt]
\item\label{Problem-8-5-17-i}
Consider Neumann boundary conditions: having smooth metrics $g$ in the vicinity of $\bar{X}$ for each point $x\notin X$ in the vicinity of $\partial X$ we can assign a mirror point $j(x)\in \bar{X}$ such that $x$ and $j(x)$ are connected by a (short) geodesics orthogonal to $Y$ at the point of intersection. Each $u$ defined in $X$ we can continue to the vicinity of $\bar{X}$ as
$ J u(x)=\psi (x) u(j(x))$ with $\psi$ supported in the vicinity of $\bar{X}$ and $\psi=1$ in the smaller vicinity of $\bar{X}$. Then
$\Lambda_m u= R_X \Delta^{m/2} Ju$.
\begin{itemize}[label=-]
\item
Establish eigenvalue asymptotics for this operator.

\item
Surely we need to prove that the choice neither of metrics outside of $X$ nor $\psi$ is important.
\end{itemize}
\item\label{Problem-8-5-17-ii}
One can also try $J u(x)=-\psi (x)u(j(x))$ and prove that eigenvalue asymptotics for this operator do not differ from what we got just for continuation by $0$.
\end{enumerate}
\end{Problem}

\begin{Problem}\label{Problem-8-5-18}
Consider manifolds with all geodesic billiards closed as in Section~\ref{book_new-sect-8-3} of~\cite{futurebook}. To do this we need to calculate the ``phase shift'' at the transversal reflection point itself seems to be an extremely challenging problem.
\end{Problem}

\begin{subappendices}
\chapter{Appendices}
\label{sect-8-A}
\section{Variational estimates for fractional Laplacian}
\label{sect-8-A-1}

We follow here R.~Frank  and  L.~Geisinger~\cite{frank:gei:2}. This is Lemma~19  and the next paragraph of their paper:

\begin{lemma}\label{lemma-8-A-1}
\begin{enumerate}[label=(\roman*), wide, labelindent=0pt]
\item\label{lemma-8-A-5-i}
Let $B$ be a non-negative operator with $\Ker B=\{0\}$ and let $P$ be an orthogonal projection. Then for any operator monotone function $\phi:(0,\infty)\to \bR$,
\begin{equation}
\label{8-A-1}
P\phi(PBP)P \ge P\phi(B)P .
\end{equation}
\item\label{lemma-8-A-5-ii}
If, in addition, $B$ is positive definite and $\phi$ is not affine linear, then $\phi(PBP)=P\phi(B)P$ implies that the range of $P$ is a reducing subspace of $B$.
\end{enumerate}
\end{lemma}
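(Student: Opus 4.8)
The plan is to treat the two statements in turn, the first by an integral-representation argument and the second by tracking the equality case in that argument. For part~\ref{lemma-8-A-5-i}, I would begin by reducing to the model operator-monotone functions. Recall the classical integral representation: any operator monotone $\phi:(0,\infty)\to\bR$ can be written as
\begin{equation*}
\phi(t)=\alpha+\beta t+\int_0^\infty \Bigl(\frac{t}{1+s}-\frac{t}{t+s}\Bigr)\,d\mu(s)
\end{equation*}
with $\beta\ge 0$ and $\mu$ a positive measure on $(0,\infty)$. The affine part $\alpha+\beta t$ satisfies $P(\alpha+\beta PBP)P=\alpha P+\beta PBP=P(\alpha+\beta B)P$, so it contributes equality and may be discarded. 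Since $t/(t+s)=1-s/(t+s)$, it suffices to prove, for each fixed $s>0$, that $P\,(PBP)(PBP+s)^{-1}P\ge P\,B(B+s)^{-1}P$, equivalently $-P(PBP+s)^{-1}P\ge -s\,P(B+s)^{-1}P\cdot s^{-1}$, i.e.
\begin{equation*}
P(PBP+s)^{-1}P\le P(B+s)^{-1}P\qquad\forall s>0.
\end{equation*}
This last inequality is the heart of part~\ref{lemma-8-A-5-i}. I would prove it by the standard Schur-complement / block-matrix computation: writing $C=B+s>0$ and decomposing with respect to $\bH=\Ran P\oplus\Ran P^\perp$, one has $PCP|_{\Ran P}=PC P$ and the inverse of the $(1,1)$-block of $C^{-1}$ is the Schur complement $PCP-PCP^\perp(P^\perp CP^\perp)^{-1}P^\perp CP\le PCP$; hence $P C^{-1}P\ge (PCP)^{-1}$ as operators on $\Ran P$, and since $P(PBP+s)^{-1}P$ is exactly $(PCP)^{-1}$ extended by $0$, the claimed inequality follows.

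For part~\ref{lemma-8-A-5-ii}, assume $B$ is positive definite (so $C=B+s\ge cs>c$ is boundedly invertible for all $s\ge 0$, and we may even take $s=0$: $B$ itself is invertible), and $\phi$ is not affine. Then $\mu\ne 0$, so there is a set of $s>0$ of positive $\mu$-measure. The equality $\phi(PBP)=P\phi(B)P$ forces, after subtracting the affine part, equality in the integrated inequality $\int_0^\infty\bigl(P(B+s)^{-1}P-P(PBP+s)^{-1}P\bigr)\,d\mu(s)=0$; since each integrand is $\ge 0$, we get $P(B+s)^{-1}P=(PCP)^{-1}$ for $\mu$-a.e.\ $s$. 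Now chase the equality case in the Schur-complement step: equality $PC^{-1}P=(PCP)^{-1}$ holds iff the off-diagonal block $PCP^\perp=P(B+s)P^\perp=PBP^\perp$ vanishes (the Schur complement equals $PCP$ exactly when the cross term is zero). Thus $PBP^\perp=0$, which says precisely that $\Ran P$ is invariant under $B$; together with self-adjointness this makes $\Ran P$ a reducing subspace.

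I expect the main obstacle to be purely expository: making the Schur-complement argument and its equality case rigorous when $B$ (hence $C$) is unbounded. The clean way around this is to work throughout with the bounded operators $(B+s)^{-1}$ for $s>0$ — these are everywhere defined and the block-matrix manipulations are then literal identities of bounded operators — and only at the very end, in part~\ref{lemma-8-A-5-ii}, to pass from "$PBP^\perp=0$ on a dense domain'' to the statement that $\Ran P$ reduces $B$, using the positive-definiteness of $B$ to control domains. Everything else is routine: the integral representation of operator monotone functions is classical (Löwner), and the non-affine hypothesis enters only through $\mu\ne 0$. I would leave these domain bookkeeping details to the reader, as is done elsewhere in this section, citing Frank--Geisinger~\cite{frank:gei:2} for the precise form used.
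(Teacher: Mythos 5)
The paper does not actually prove this lemma: it is quoted verbatim from Frank and Geisinger \cite{frank:gei:2} (their Lemma~19), and the text simply says ``we refer to the proof given there.'' Your argument is therefore not competing with anything in the paper itself; in substance it is the standard proof and essentially the one in the cited reference: reduce via the L\"owner integral representation to resolvent-type building blocks, prove the key inequality $P(PBP+s)^{-1}P\le P(B+s)^{-1}P$ by the Schur-complement identity $(C^{-1})_{11}=(C_{11}-C_{12}C_{22}^{-1}C_{21})^{-1}$ with $C=B+s$, and for part~\ref{lemma-8-A-5-ii} observe that equality forces $C_{12}C_{22}^{-1}C_{21}=0$, hence $PBP^{\perp}=0$, hence $\Ran P$ reduces $B$. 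Both halves of that skeleton are sound, and your remarks about where the unboundedness of $B$ must be handled are the right ones.

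One slip to fix: the integral representation as you wrote it, with integrand $\frac{t}{1+s}-\frac{t}{t+s}$, is not correct --- that function is not even monotone in $t$ (its non-affine part is $-\frac{t}{t+s}$, which is operator \emph{decreasing}), and if you feed it literally into your reduction you would need the \emph{reverse} of the inequality you then prove. The correct L\"owner form has the non-affine part entering with a positive measure against an operator \emph{increasing} kernel, e.g. $\phi(t)=\alpha+\beta t+\int_0^\infty\frac{\lambda t}{\lambda+t}\,d\mu(\lambda)$, or equivalently $\alpha+\beta t+\int_0^\infty\bigl(\frac{\lambda}{1+\lambda^2}-\frac{1}{t+\lambda}\bigr)\,d\mu(\lambda)$ with $\beta\ge0$ and $\mu\ge0$. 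With that form your subsequent chain --- ``it suffices to prove $P(PBP)(PBP+s)^{-1}P\ge PB(B+s)^{-1}P$, equivalently $P(PBP+s)^{-1}P\le P(B+s)^{-1}P$'' --- is exactly what is needed and exactly what the Schur complement delivers, so the error is confined to the quoted formula and does not propagate; the statement ``$\mu\ne0$ because $\phi$ is not affine'' in part~\ref{lemma-8-A-5-ii} is then also literally correct for the corrected representation.
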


We recall that, by definition, the range of $P$ is a reducing subspace of a non-negative (possibly unbounded) operator if $(B+\tau)^{-1}\Ran P\subset\Ran P$ for some $\tau>0$. We note that this is equivalent to $(B+\tau)^{-1}$ commuting with $P$, and we see that the definition is independent of $\tau$ since
\begin{multline*}
(B+\tau')^{-1} P - P (B+\tau')^{-1} \\
= (B+\tau) (B+\tau')^{-1} \left( (B+\tau)^{-1} P - P (B+\tau)^{-1} \right) (B+\tau) (B+\tau')^{-1} .
\end{multline*}

We refer to the proof given there.

\begin{corollary}\label{cor-8-A-2}
The following inequality holds
\begin{equation}
\Lambda_{m,X} \le \Lambda_{n,X}^{m/n} \qquad \text{as\ \ }0<m<n.
\label{8-A-2}
\end{equation}
\end{corollary}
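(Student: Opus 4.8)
The plan is to deduce \eqref{8-A-2} directly from Lemma~\ref{lemma-8-A-1}\ref{lemma-8-A-5-i} by choosing the right operator, the right projection, and the right operator monotone function. First I would take $B$ to be the full-space operator $\Delta^{n/2}$ acting on $\sL^2(\bR^d)$ (equivalently $g(x,\xi)^{n/2}$ as a Weyl quantization); since $g$ is a non-degenerate Riemannian metric, $\Delta$ is a positive self-adjoint operator with trivial kernel, hence so is $B=\Delta^{n/2}$, and moreover $B$ is positive definite on the relevant subspace so both parts of the lemma apply. Next I would let $P$ be the orthogonal projection in $\sL^2(\bR^d)$ onto the subspace $\{u:\supp(u)\subset\bar X\}$, i.e. multiplication by $\uptheta_X$; with this choice $PBP$, viewed as an operator on $\Ran P\cong\sL^2(X)$, is exactly $R_X\Delta^{n/2}(\uptheta_X\,\cdot\,) = \Lambda_{n,X}$ by the defining formula \eqref{8-5-1}. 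Finally I would take $\phi(t)=t^{m/n}$, which for $0<m<n$ is operator monotone on $(0,\infty)$ by the Löwner–Heinz inequality.

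With these substitutions the left side of \eqref{8-A-1} reads $P\,(PBP)^{m/n}\,P$, which on $\Ran P$ is $\Lambda_{n,X}^{m/n}$, and the right side reads $P\,B^{m/n}\,P = P\,\Delta^{m/2}\,P$, which on $\Ran P$ is precisely $R_X\Delta^{m/2}(\uptheta_X\,\cdot\,)=\Lambda_{m,X}$ again by \eqref{8-5-1}. Thus \eqref{8-A-1} becomes $\Lambda_{n,X}^{m/n}\ge\Lambda_{m,X}$, which is \eqref{8-A-2}. One should be slightly careful about the sense in which the inequality holds, since the operators are unbounded; the cleanest route is to interpret everything in the form sense, using Remark~\ref{rem-8-5-1}\ref{rem-8-5-1-ii} to identify $\Lambda_{m,X}$ and $\Lambda_{n,X}$ with the Friedrichs extensions associated to the quadratic forms $\|(\Delta^{m/4})(\uptheta_X u)\|^2$ and $\|(\Delta^{n/4})(\uptheta_X u)\|^2$, and to note that $\Lambda_{n,X}^{m/n}$ is unambiguously defined by the spectral theorem since $\Lambda_{n,X}$ is a positive self-adjoint operator. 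The statement of Lemma~\ref{lemma-8-A-1} is formulated precisely to accommodate possibly unbounded $B$, so the operator monotone functional calculus inequality transfers without change.

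The only genuinely delicate point — and the step I would expect to be the main obstacle — is the bookkeeping between $\Ran P$ and $\sL^2(X)$: one must check that compressing $B=\Delta^{n/2}$ to the subspace of functions supported in $\bar X$ really reproduces $\Lambda_{n,X}$ with its correct domain, and likewise that the compression of $B^{m/n}=\Delta^{m/2}$ reproduces $\Lambda_{m,X}$, rather than some a priori different self-adjoint realization. This is where the $\mu$-transmission subtleties of Remark~\ref{rem-8-5-1}\ref{rem-8-5-1-v} could in principle intrude, but in fact they do not affect the form-level identification: $PBP$ on $\Ran P$ is by construction the Friedrichs extension of the form $b(u)=\|\Delta^{n/4}u\|^2$ restricted to $\{u\in\sH^{n/2}:\supp u\subset\bar X\}$, which is exactly Remark~\ref{rem-8-5-1}\ref{rem-8-5-1-ii}. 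Once this identification is recorded, the corollary is immediate; I would leave the remaining routine verifications to the reader, following R.~Frank and L.~Geisinger~\cite{frank:gei:2}.
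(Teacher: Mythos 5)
Your proposal is correct and is exactly the paper's proof: the paper also plugs $B=\Delta^{n/2}$, $P=\uptheta_X$, and $\phi(\lambda)=\lambda^{m/n}$ into (\ref{8-A-1}), merely stating it in one line where you have spelled out the operator-monotonicity of $t^{m/n}$ and the identification of $P\Delta^{k/2}P$ on $\Ran P$ with $\Lambda_{k,X}$. No further comparison is needed.
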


\begin{proof}
Plugging into (\ref{8-A-1})  $B = \Delta^{n/2}$ in $\bR^d$,  $P=\theta_X(x)$ and  $\phi(\lambda) = \lambda^{m/n}$ we get (\ref{8-A-2}).
\end{proof}

Repeating arguments of Proposition~20 and following it Subsection 6.4 of R.~Frank  and  L.~Geisinger~\cite{frank:gei:2} (powers of operators will be different but also negative) we conclude that

\begin{proposition}\label{prop-8-A-3}
Let $d\ge 2$. Then   $-\varkappa_m$ is positive strictly monotone increasing function of $m>0$.
\end{proposition}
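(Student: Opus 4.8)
The statement to prove is Proposition~\ref{prop-8-A-3}: for $d\ge 2$, the quantity $-\varkappa_m$ defined in (\ref{8-5-28}) is a positive, strictly monotone increasing function of $m>0$. Since we are told to follow R.~Frank and L.~Geisinger's Proposition~20 and Subsection~6.4, the plan is to exploit the variational characterization embodied in Lemma~\ref{lemma-8-A-1} and Corollary~\ref{cor-8-A-2}, transplanted to the one-dimensional model operator $\mathbf{a}_m=((D_x^2+1)^{m/2})_\D$ on $\bR^+$, whose spectral projector kernel $\mathbf{e}_m(x_1,x_1,\lambda)$ enters (\ref{8-5-28}).

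\textbf{Key steps.} First I would recast $\varkappa_m$ in a cleaner form. The double integral in (\ref{8-5-28}) is, after integrating in $\lambda$ against $\lambda^{-(d-1)/m-1}$, a trace-type quantity: it measures the $\lambda$-averaged defect $\int_0^\infty\bigl(\mathbf{e}_m(x_1,x_1,\lambda)-\pi^{-1}(\lambda-1)_+^{1/m}\bigr)\,dx_1$, i.e. (up to normalization) the boundary correction to the counting function of $\mathbf{a}_m$. So $-\varkappa_m$ is, up to a positive constant, $\int_1^\infty \lambda^{-(d-1)/m-1}\bigl[\text{(Weyl count)} - \N_{\mathbf{a}_m}(\lambda)\bigr]$-type expression. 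Step two: positivity of $-\varkappa_m$. Here Corollary~\ref{cor-8-A-2} is the engine: the operator $\mathbf{a}_m$ satisfies $\mathbf{a}_m\le \mathbf{a}_n^{m/n}$ for $m<n$, hence its counting function lies below the Weyl term (the Weyl term being the "Dirichlet-free" reference coming from $\Delta^{m/2}$ on the whole line), which forces the bracket to be $\ge 0$ and, by the strict inequality in Lemma~\ref{lemma-8-A-1}\ref{lemma-8-A-5-ii} (the range of $\uptheta_X$ is not reducing for $\Delta^{m/2}$ since $m\notin 2\bZ$), strictly positive. Step three: monotonicity in $m$. This is where I would mirror Subsection~6.4 of Frank--Geisinger. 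Write $\mathbf{a}_m$ via a suitable functional calculus / integral representation — e.g. using $\lambda^{m/2} = c_m\int_0^\infty (1-e^{-t\lambda})t^{-1-m/2}\,dt$ on a renormalized scale, or differentiating the family $m\mapsto \mathbf{a}_m$ — and differentiate the representation of $-\varkappa_m$ with respect to $m$. The operator-monotonicity/convexity structure should show that the $m$-derivative of the boundary defect has a fixed sign. Concretely, comparing $\mathbf{a}_m$ with $\mathbf{a}_n^{m/n}$ for $n$ near $m$ via Lemma~\ref{lemma-8-A-1} gives a one-parameter family of inequalities whose infinitesimal version is exactly the monotonicity of the defect; integrating against $\lambda^{-(d-1)/m-1}$ (and carefully tracking how that weight also depends on $m$) yields strict monotonicity of $-\varkappa_m$. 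The condition $d\ge 2$ enters to make $(d-1)/m>0$ so that the $\lambda$-integral converges at $\lambda=1$ and the weight is genuinely decreasing, which is needed both for finiteness and for the sign of the contribution of the $m$-dependence of the exponent.

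\textbf{Main obstacle.} The genuinely delicate point is Step three, the strict monotonicity: one must disentangle the two ways $m$ enters (through the operator $\mathbf{a}_m$ and through the weight $\lambda^{-(d-1)/m-1}$ in the $\lambda$-average) and show their combined effect has a definite sign. The operator-theoretic inequality from Lemma~\ref{lemma-8-A-1} controls $\mathbf{e}_m(x_1,x_1,\lambda)$ pointwise in $\lambda$ only after comparing to $\mathbf{a}_n^{m/n}$, which is not simply a dilation of $\mathbf{a}_m$; one needs the scaling identities of Proposition~\ref{prop-8-5-8} together with the $\mu$-transmission regularity (G.~Grubb) bounds $|u(x_1)|\le Cx_1^{(m-1)/2}\|\mathbf{a}_m^{1/m}u\|$ already invoked in the proof of (\ref{8-5-22}) to control the near-boundary behavior of eigenfunctions uniformly in $m$ on compact $m$-intervals, so that differentiation under the integral sign is legitimate. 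Once the uniform estimates are in place, the sign follows from operator monotonicity exactly as in Frank--Geisinger, and I would simply cite their argument for the remaining routine manipulations, leaving the easy bookkeeping to the reader as the paper does elsewhere.
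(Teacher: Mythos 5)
Your overall strategy---recast $\varkappa_m$ as a trace-type boundary defect for the model operator $\mathbf{a}_m$ and then run the Frank--Geisinger machinery of Lemma~\ref{lemma-8-A-1} and Corollary~\ref{cor-8-A-2}---is exactly the paper's approach (the paper literally says ``repeat the arguments of their Proposition~20 and Subsection~6.4 with negative powers'' and leaves all details to the reader). However, your elaboration of the positivity step contains a genuine gap. You write that $\mathbf{a}_m\le\mathbf{a}_n^{m/n}$ implies the counting function of $\mathbf{a}_m$ ``lies below the Weyl term.'' This does not follow: first, $A\le B$ pushes eigenvalues \emph{down} and hence makes the counting function of $A$ \emph{larger}, not smaller; second, Corollary~\ref{cor-8-A-2} compares two Dirichlet-type realizations with each other, whereas the Weyl term in (\ref{8-5-28}) is the diagonal spectral function of the \emph{whole-line} operator $B_m=(D^2+1)^{m/2}$. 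The comparison you actually need is between $g(PB_mP)$ and $Pg(B_m)P$ with $P=\uptheta_{\bR^+}$, i.e.\ Lemma~\ref{lemma-8-A-1} or its trace form (the Berezin--Lieb inequality), not Corollary~\ref{cor-8-A-2}. Concretely: integrating by parts in $\lambda$ in (\ref{8-5-28}) (using $\Spec\mathbf{a}_m=[1,\infty)$ from Proposition~\ref{prop-8-5-8}) converts $\varkappa_m$ into the regularized relative trace of $g(PB_mP)-Pg(B_m)P$ with $g(\lambda)=\lambda^{-(d-1)/m}$; since $g$ is convex this relative trace is $\le 0$, and it is $<0$ by Lemma~\ref{lemma-8-A-1}\ref{lemma-8-A-5-ii} because $\Ran\uptheta_{\bR^+}$ is not reducing for $B_m$ when $m\notin2\bZ$. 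This is what the paper's parenthetical ``powers of operators will be different but also negative'' refers to.

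Your monotonicity step, by contrast, is made much harder than it needs to be: there is nothing to ``disentangle,'' because the weight in (\ref{8-5-28}) is chosen precisely so that the reference term is $m$-independent, namely $g(B_m)=(D^2+1)^{-(d-1)/2}$. Hence $-\varkappa_m$ is (up to the regularization of the trace) $\Tr\bigl[P(D^2+1)^{-(d-1)/2}P-(\mathbf{a}_m)^{-(d-1)/m}\bigr]$, and only the subtracted term depends on $m$. Corollary~\ref{cor-8-A-2} gives $\mathbf{a}_m\le\mathbf{a}_n^{m/n}$ for $m<n$; applying the decreasing function $\lambda\mapsto\lambda^{-(d-1)/m}$ and min--max reverses the inequality at the level of traces, so $\Tr(\mathbf{a}_m)^{-(d-1)/m}\ge\Tr(\mathbf{a}_n^{m/n})^{-(d-1)/m}=\Tr(\mathbf{a}_n)^{-(d-1)/n}$ and therefore $-\varkappa_m\le-\varkappa_n$. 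No differentiation in $m$ and no uniform $\mu$-transmission eigenfunction bounds are required; the genuine remaining work is justifying these manipulations for operators with purely continuous spectrum (relative traces and cutoffs), which is exactly the content of Frank--Geisinger's Subsection~6.4 that the paper defers to.
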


We leave details to the reader.

\section{$\mu$-transmission property}
\label{sect-8-A-2}

Proposition~1 of G.~Grubb~\cite{grubb:frac} claims that

\begin{proposition}\label{prop-8-A-4}
A necessary and sufficient condition in order that
$R_X Pu\in \sC^\infty (\bar{X})$ for all $u\in \sE_\mu(\bar{X})$ is that $P$ satisfies the \emph{$\mu $-transmission condition\/} (in short: is of type
$\mu $), namely  that
\begin{equation}
\partial_x^\beta \partial_\xi ^\alpha {p_j}(x,-N)=
e^{\pi i(m-2\mu -j-|\alpha | )} \partial_x^\beta \partial_\xi ^\alpha{p_j}(x,N)  \qquad\forall x\in\partial\Omega,
\label{8-A-3}
\end{equation}
for all $j,\alpha ,\beta $, where $N$ denotes the interior normal to
$\partial X $ at $x$, $m$ is an order of classical pseudo-differential operator $P$ and for $\mu \in \bC$ with $\Re\mu >-1$.
\end{proposition}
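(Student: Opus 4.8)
This is Proposition~1 of G.~Grubb~\cite{grubb:frac}; the plan is to follow her argument, and we only indicate its structure here. First I would localize: using a partition of unity subordinate to a cover of $\partial X$ by coordinate patches in which $X=\{x_1>0\}$, and using that the statement is genuinely local near $\partial X$ (away from the boundary every classical $\Psi$DO preserves $\sC^\infty$), the claim reduces to the model half-space $\bR^d_+=\{x_1>0\}$. A Fourier transform $F_{x'\to\xi'}$ then turns $P$ into a family, parametrized by $(x',\xi')$, of one-dimensional operators $\Op_1\bigl(p(x,\xi_1,\xi')\bigr)$ acting in the normal variable $x_1$, and the space $\sE_\mu(\bar X)$ is correspondingly described near the boundary by functions $e^+\bigl(x_1^\mu\phi(x_1)\bigr)$ with $\phi\in\sC^\infty$ and $e^+$ the extension by zero.

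The heart of the matter is the one-dimensional computation. The relevant part of such a model function has Fourier transform in $x_1$ equal, modulo rapidly decaying terms, to $c_\mu\,(\xi_1-i0)^{-\mu-1}$ with $c_\mu$ a Gamma-factor. Expanding the classical symbol as $p\sim\sum_j p_j$ into terms $p_j$ positively homogeneous of degree $m-j$ in $(\xi_1,\xi')$ and applying $\Op_1(p_j)$, one has to study $r^+\Op_1\!\bigl(p_j(x,\xi_1,\xi')\,(\xi_1-i0)^{-\mu-1}\bigr)$. For large $|\xi_1|$ the symbol $p_j(x,\xi_1,\xi')(\xi_1-i0)^{-\mu-1}$ is asymptotically homogeneous of degree $m-j-\mu-1$, with two ``branches'' as $\xi_1\to+\infty$ and $\xi_1\to-\infty$; splitting it into a part extending holomorphically into $\Im\xi_1>0$ (whose inverse Fourier transform is supported in $x_1<0$ and is therefore annihilated by $r^+$) plus a remainder, one finds that $r^+\Op_1(p_j)$ applied to the model function is smooth up to $x_1=0$ exactly when the two branches match after the jump across $\xi_1=\infty$ is compensated, i.e.\ when $p_j(x,-N)=e^{\pi i(m-2\mu-j)}p_j(x,N)$. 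Differentiating in $\xi'$ before restricting lowers the degree of homogeneity by $|\alpha|$, which shifts the exponent to $m-2\mu-j-|\alpha|$; the derivatives $\partial_x^\beta$ enter because the identity must hold for the full $x$-dependent symbol, uniformly in $x\in\partial X$. This yields (\ref{8-A-3}).

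For sufficiency one assembles these model facts: if every $p_j$ satisfies (\ref{8-A-3}), then term by term $r^+\Op_1(p_j)$ maps the model space into $\sC^\infty(\bar\bR_+)$ with estimates locally uniform in $(x',\xi')$, and summing the asymptotic series while controlling the remainder by the usual symbol estimates gives $R_X Pu\in\sC^\infty(\bar X)$ for all $u\in\sE_\mu(\bar X)$. For necessity one tests $P$ on explicit functions in $\sE_\mu(\bar X)$ and reads off the boundary behaviour: a failure of (\ref{8-A-3}) at some $(x,j,\alpha,\beta)$ forces a non-smooth contribution (of $x_1^{\mu+k}$-type, possibly with a logarithm) to $R_X Pu$.

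The main obstacle is turning the one-dimensional Fourier heuristic into a rigorous statement about operator-valued symbols, uniform in $\xi'$, together with the precise definition and handling of $\sE_\mu(\bar X)$ for complex $\mu$ with $\Re\mu>-1$; this is exactly the content of Grubb's $\mu$-transmission calculus developed in \cite{grubb:mu-trans,grubb:frac}, to which we refer for the complete argument.
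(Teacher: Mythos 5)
The paper gives no proof of this proposition at all: it is imported verbatim as Proposition~1 of G.~Grubb \cite{grubb:frac}, so your deferral to Grubb's $\mu$-transmission calculus is exactly the paper's own approach. Your added sketch (localization to the half-space, the Fourier transform $c_\mu(\xi_1-i0)^{-\mu-1}$ of the model function, and the matching of the two branches of the homogeneous terms at $\xi_1\to\pm\infty$ after the Wiener--Hopf splitting) is the standard route Grubb follows and is consistent with the stated condition, so there is nothing to object to.
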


Here  $\sE_\mu (\bar{X})$ denotes the space of functions $u$ such that
$u=E_X d(x)^\mu v$ with  $v\in \sC^\infty (\bar{X})$ where $E_X$ is an operator of extension by $0$ to $\bR^d\setminus X$ and $d(x)=\dist (x,\partial X)$.

Observe that for $\mu=0$ we have an ordinary transmission property (see Definition \ref{book_new-def-1-4-3}).
\end{subappendices}

\chapter{Global theory}
\label{sect-11-3-7}

Let us discuss fractional Laplacians defined by (\ref{8-5-1}) in domain $X\subset \bR^d$. Then under additional condition
\begin{equation}
\dist (x,y) \le C_0 |x-y|\qquad \forall x,y \in X
\label{11-3-54}
\end{equation}
(where $\dist(x,y)$ is a ``connected'' distance between $x$ and $y$) everything seems to work. We leave to the reader.

\begin{Problem}\label{Problem-11-3-37}
Under assumption (\ref{book_new-11-3-54}) of \cite{futurebook}
\begin{enumerate}[label=(\roman*), wide, labelindent=0pt]
\item\label{Problem-11-3-37-i}
prove Lieb-Cwikel-Rozeblioum estimate (\ref{book_new-9-A-11}) of \cite{futurebook}.

\item\label{Problem-11-3-37-ii}
Restore results of Chapter~\ref{book_new-sect-9} of \cite{futurebook}.

\item\label{Problem-11-3-37-iii}
Reconsider examples of Sections~\ref{book_new-sect-11-2} and \ref{book_new-sect-11-3} of \cite{futurebook}.
\end{enumerate}
\end{Problem}

\begin{remark}\label{rem-11-3-38}
Obviously domains with cuts and inner spikes (inner angles of $2\pi$)
do not fit (\ref{11-3-54}). On the other hand in the case of the domain with the cut due to non-locality of $\Delta^r$ with
$r\in \mathbb{R}^+\setminus \mathbb{Z}$ both sides of the cut ``interact'' and at least coefficient in the second term of two-term asymptotics may be wrong; in the case of the inner spike some milder effects are expected.

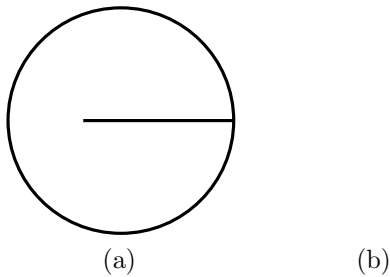
\begin{figure}[h!]
\centering
\subfloat[]{%
\begin{tikzpicture}
\draw[very thick] (0,0) circle (1.5);
\draw[very thick] (-.5,0)--(1.5,0);
\end{tikzpicture}}\qquad\qquad
\subfloat[]{%
\begin{tikzpicture}
\draw[very thick, domain=-pi:pi,scale=1, smooth] plot[parametric,id=parametric-example]
 function{3*cos(t/2)*cos(t), 2*cos(t/2)*sin(t)};
\end{tikzpicture}}
\caption{\label{fig-11-7} Domain with a cut (a) and an inner spike (b).}
\end{figure}

\end{remark}

The following problem seems to be very challenging:

\begin{Problem}\label{Problem-11-3-39}
\begin{enumerate}[label=(\roman*), wide, labelindent=0pt]
\item\label{Problem-11-3-39-i}
Investigate fractional Laplacians in domains with cuts and inner spikes and save whatever is possible.
\item\label{Problem-11-3-39-ii}
Generalize these results to higher dimensions.
\end{enumerate}
\end{Problem}

\chapter*{Comments}

To my surprise I learned that fractional Laplacians are of the interest to probability theory:  which seem to be of interest to probability theory starting from R.~M.~Blumenthal,~R. M. and R.~K.~Getoor~\cite{blum:get} and then by R.~Ba{\~n}uelos and T.~Kulczycki~\cite{ban:kul}, R.~Ba{\~n}uelos, T.~Kulczycki and B.~Siudeja.~\cite{ban:kul:siu}, M.~Kwa\'snicki \cite{kwas}.

Those operators were formulated in the framework of stochastic processes and thus were not accessible for me until I found paper R.~Frank  and  L.~Geisinger~\cite{frank:gei:2} provided definition we follow here. They showed that the trace  has a two-term expansion  regardless of dynamical assumptions\footnote{\label{foot-8-24} The fact that R.~Frank and L.~Geisinger obtain a second term regardless of dynamical assumptions is simply due to the fact that they study $\Tr (f(\Lambda_{m,X}))$ with
$f(\lambda) = -\lambda \uptheta(-\lambda)$, which is one order smoother than $f(\lambda) = \uptheta(-\lambda)$.}, and the second term in their expansion paper~\cite{frank:gei:2} defined by (3.2)--(3.3) is closely related to $\kappa_{1,m}$.

Furthermore I learned that one-term asymptotics for more general operators (albeit without remainder estimate) was obtained by G.~Grubb~\cite{grubb:frac-sp}.

I express my gratitudes to G.~Grubb and R.~Frank for pointing  to rather nasty errors in the previous version and  very useful comments
 and R.~Ba{\~n}uelos  for very useful comments.


\end{document}